\theoremstyle{plain}
\newtheorem{theorem}{Theorem}
\newtheorem{lemma}[theorem]{Lemat}
\newtheorem{corollary}[theorem]{Corollary}
\newtheorem{definition}[theorem]{Definicja}
\theoremstyle{remark}
\newtheorem{remark}[theorem]{Uwaga}
\begin{document}

\title[]{Fractal functions defined in terms of number representations in systems with a redundant alphabet
}
\author[M.\ V.\ Pratsiovytyi and 
S.\ P.\ Ratushniak]{ M.\ V.\ Pratsiovytyi, S.\ P.\ Ratushniak, Yu.\ Yu.\ Vovk,
Ya.\ V.\  Goncharenko}

\newcommand{\eacr}{\newline\indent}

\address{M.V. Pratsiovytyi\eacr
Institute of Mathematics of NASU,
 Dragomanov Ukrainian State University, Kyiv,
Ukraine\acr ORCID 0000-0001-6130-9413}
\email{prats4444@gmail.com}

\address{S.P. Ratushniak\eacr
Institute of Mathematics of NASU,
 Dragomanov Ukrainian State University, Kyiv,
Ukraine\acr ORCID 0009-0005-2849-6233}
\email{ratush404@gmail.com}

\address{Yu.Yu. Vovk\eacr
K. D. Ushynskyi Chernihiv Regional Institute of Postgraduate Pedagogical Education}
\email{freeeidea@ukr.net}

\address{Ya.V. Goncharenko\eacr
 Dragomanov Ukrainian State University, Kyiv,
Ukraine\acr ORCID 0009-0009-2541-7631}
\email{goncharenko.ya.v@gmail.com}

\subjclass[2020]{Primary: 11K55; Secondary:  26A30}
\keywords{Numeral system with natural base and redundant alphabet, cylinder, achievement set (set of subsums) of a convergent series, self-similar set, fractal dimension, nowhere monotonic function, functions of unbounded variation, fractal level set of function.\\This work was supported by a grant from the Simons
Foundation (SFI-PD-Ukraine-00014586, M.P., S.R.}

\date{\today}

\newcommand{\acr}{\newline\indent}

\begin{abstract}
For fixed natural numbers $r$ and $s$, where $2\leq s \leq r$, we consider a representation of numbers from the interval $[0;\frac{r}{s-1}]$ obtained by encoding numbers by means of the alphabet $A=\{0,1,...,r\}$ via the expansion $x=\sum\limits_{n=1}^{\infty}s^{-n}\alpha_n=\Delta^{r_s}_{\alpha_1\alpha_2...\alpha_n...}.$ The algorithm for expanding a number into such a series is justified in the paper. The geometry of this representation is studied, including the geometric meaning of digits, properties of cylinder sets -- particularly the specificity of their overlaps -- and metric relations, as well as the connection between the representation and partial sums of the corresponding series.

The paper also presents results on the study of a function $f$ defined by
\[f(x=\sum\limits_{n=1}^{\infty}\frac{\alpha_n}{(r+1)^n})=\Delta^{r_s}_{\alpha_1\alpha_2...\alpha_n...}, \alpha_n\in A.\]
It is proved that the function $f$ is continuous at every point that has a unique representation in the classical numeration system with base $r+1$, and discontinuous at points having two representations. The function has unbounded variation and a self-affine graph. For $r<2s-1$, the function possesses singleton, finite, countable, and continuum level sets, including fractal ones; for $r>2s-2$, every level set is a continuum, and moreover it is fractal or anomalously fractal.
\end{abstract}

\maketitle

\section{Introduction}
Numeral system, or systems for encoding real numbers, provide a form of existence and use of a number itself~\cite{monog_2022,Schweiger,TurbPrats1992}. There exist many essentially different systems for encoding numbers~\cite{Galambos1976,monog_2022,Schweiger}, each occupying its own niche of effective application, in particular for the description of locally complicated (in the topological and metric sense) objects such as sets, functions, and probability measures~\cite{monog_1998,prats_dop(disrtibution),prats_dop(convolution),PratsMak_UMJ}, dynamical systems~\cite{Kosh,Stan}, and others.

The present paper is devoted to numeral systems with a natural base $s$ and an alphabet $A_r\equiv \{0,1,...,r\}$, whose cardinality exceeds $s$~\cite{gonmyk,MykPrats_CH,PratsMak_UMJ,Pratratushmatst,PratsVynnO}.

For the analytic description and investigation of locally complicated metric objects (sets, fractals~\cite{PanasenkoCH8,PanasenkoUMJ,PratsMak_UMJ,TurbPrats1992}, functions, random variables~\cite{monog_1998}, dynamical systems~\cite{Albeverio}, etc.), various tools are used today, among them systems for encoding real numbers~\cite{pratscherchuvovk,Rat_Buk1,Rat_Buk2}. The arsenal of such tools is constantly expanding~\cite{PratsBondar}. This toolkit includes, in particular, numeral systems with a natural base and a redundant or nonstandard digit set~\cite{Kovalenko}.

By locally complicated objects we primarily mean continuous nowhere monotone and nowhere differentiable functions~\cite{Bush,pflug,PanasenkoCH,Pratscantorprojects,Pratsiovytyi:2013:FPF,Thim,Wunder}. Such functions often possess fractal level sets, self-similar graphs, and other manifestations of fractality, both metric and structural. The general theory of such functions is still poorly developed and is enriched mainly through individual theories of prominent representatives. Hundreds of works are devoted to further investigations of classical examples of such functions (the Weierstrass function~\cite{pflug}, the Takagi function~\cite{Allart,pflug,takagi}, the Sierpi\'nski-type functions~\cite{Pratsiovytyi:2013:FPF,serpinskyi}, etc.), their generalizations and analogues.

Among nowhere monotone functions there are functions of bounded and unbounded variation~\cite{Pratsiovytyi:2022:NLO}, functions that are nowhere differentiable, as well as functions that possess a derivative almost everywhere in the sense of Lebesgue measure, in particular singular functions whose derivative is equal to zero almost everywhere~\cite{Pratsiovytyi:2011:NMS:en,Pratsiovytyi:2022:NLO}.

Analytic constructions of nowhere monotone functions are restricted to a relatively small collection of techniques and methods: the method of condensation of singularities, the IFS method~\cite{chen,pp18,PanasenkoMatst}, defining a function by a series~\cite{PratsPanas_Visnyk}, by systems of functional equations~\cite{PratsKalash}, by finite-memory automata~\cite{PrBarMas_GenerTr}, by projecting various number representations, and others.

A separate, still insufficiently studied and interesting class of functions is formed by functions defined on an interval that have a countable everywhere dense set of discontinuity points while being continuous at all remaining points. The present paper is devoted to one class of such functions. It continues the investigations initiated in~\cite{Pratratushmatst,PratsVynnO,Vas_Vovk_Prat}.

\section{Representation of numbers in system with redundant alphabet}
Let $s$ and $r$ be fixed natural numbers such that $1<s \leq r$, let $A_r\equiv \{0,1,...,r\}$ be an alphabet (set of numbers), and let
$L_r\equiv A_r\times A_r\times...$ be the space (set) of sequences of elements from the alphabet, namely
 $L_r=\{(\alpha_n):~\alpha_n\in A_r\;\forall n\in N \}$. We consider expressions of the form
\begin{equation}\label{v1}
  \frac{\alpha_1}{s}+\frac{\alpha_2}{s^2}+\ldots+\frac{\alpha_n}{s^n}+\ldots=\sum\limits_{n=1}^{\infty}s^{-n}\alpha_n\equiv
  \Delta^{r_s}_{\alpha_1\alpha_2...\alpha_n...}, (\alpha_n)\in L_r.
\end{equation}
\begin{definition}
 If the number $x$ is the sum of the series~\eqref{v1}, then this series is called its \emph{$r_s$-expansion}, and symbolic notation
  $\Delta^{r_s}_{\alpha_1\alpha_2...\alpha_n...}$ is called the \emph{$r_s$-representation}.
\end{definition}
Parentheses in the $r_s$-representation of a numbers indicate a periodic. Clearly, a numbers $0$ and $\frac{r}{s-1}$ are minimum and maximum values of expression~\eqref{v1}. More over its have single $r_s$-representation:
$0=\Delta^{r_s}_{(0)}$; $\frac{r}{s-1}=\Delta^{r_s}_{(r)}=\frac{r}{s}+\frac{r}{s^2}+...+\frac{r}{s^n}+...$

Recall that the \emph{achievement set} (set of subsums) of a convergent series
\begin{equation}u_1+u_2+ ...+u_n+...= u_1+u_2+...+u_n+r_n=S_n+r_n=r_0
\end{equation} is defined as $E(u_n)=\{x: x=\sum\limits_{n=1}^{\infty}{\varepsilon_nu_n}, (\varepsilon_n)\in L_2\},$
where the sequence $(\varepsilon_n)$ of zeros and ones ranges over the set $L_2\equiv A_2\times A_2\times...$  of all sequences of elements of a two-symbol alphabet $A_2=\{0,1\}$.

It is obvious that the set $E$ of values of expressions ~\eqref{v1} coincides with the set of subsums of the series
\begin{equation}\label{num3}
\sum\limits_{n=1}^{\infty}u_n=\underbrace{\frac{1}{s}+...+\frac{1}{s}}_{r}+...
+\underbrace{\frac{1}{s^k}+...+\frac{1}{s^k}}_{r}+...,
\end{equation}  where  $u_{rn-(r-1)}=u_{rn-(r-2)}=...=u_{rn}=\frac{1}{s^n}$. Since $u_n\leq r_n\equiv u_{n+1}+u_{n+2}+...$ for any $n\in N$ it follows from the famous Kakeya theorem~\cite{kak}, which concerns the topological and metric properties of achievement set of a absolutely convergent series, that the set $E$ of subsums of the series~\eqref{num3} is a closed interval $[0,r_0]$.

Let us give an independent constructive proof of this statement, which highlights the ``geometry'' of the $r_s$-representation and, at the same time, provides an algorithm for the decomposition of number $x\in[0;\frac{r}{s-1}]$ into the series~\eqref{v1}. 
\begin{theorem}\label{th1}
  For any $x\in[0;\frac{r}{s-1}]$ there exist a sequence $(\alpha_n)\in L_r$ such that 
  \begin{equation}\label{v2}
    x=\sum\limits_{n=1}^{\infty}s^{-n}\alpha_n\equiv\Delta^{r_s}_{\alpha_1\alpha_2...\alpha_n...}.
  \end{equation}
  \end{theorem}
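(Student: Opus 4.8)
The plan is to make the greedy (cascade) extraction algorithm explicit and to prove by induction that it always terminates in a valid expansion. Given $x\in[0;\frac{r}{s-1}]$, I set $S_0=0$ and, having determined digits $\alpha_1,\ldots,\alpha_{n-1}$ with partial sum $S_{n-1}=\sum_{k=1}^{n-1}s^{-k}\alpha_k$, I form the residual $y_{n-1}=x-S_{n-1}$ and choose
\[
\alpha_n=\min\{r,\ \lfloor s^n y_{n-1}\rfloor\},
\]
that is, the largest admissible digit whose contribution $s^{-n}\alpha_n$ does not exceed the residual. The invariant I carry along is $0\le y_{n-1}\le \frac{r}{s^{n-1}(s-1)}$, which says precisely that the residual still lies in the range of tails $\sum_{k\ge n}s^{-k}\alpha_k$ of the series; for $n=1$ this is exactly the hypothesis $x\in[0;\frac{r}{s-1}]$.

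The core of the argument is to check that this invariant is preserved, i.e.\ that after choosing $\alpha_n$ the new residual $y_n=y_{n-1}-s^{-n}\alpha_n$ again satisfies $0\le y_n\le \frac{r}{s^n(s-1)}$. Non-negativity is immediate from $\alpha_n\le s^n y_{n-1}$. For the upper bound I split into two cases. If the digit is not capped, $\alpha_n=\lfloor s^n y_{n-1}\rfloor\le r$, then $s^n y_{n-1}-\alpha_n<1<\frac{r}{s-1}$ (using $r\ge s>s-1$, whence $\frac{r}{s-1}>1$), which is the required estimate $s^n y_n\le\frac{r}{s-1}$. If instead the digit is capped, $\alpha_n=r$ with $\lfloor s^n y_{n-1}\rfloor\ge r$, then I invoke the inductive bound $y_{n-1}\le\frac{r}{s^{n-1}(s-1)}$ to get $s^n y_{n-1}=s\cdot s^{n-1}y_{n-1}\le\frac{rs}{s-1}$, and hence $s^n y_{n-1}-r\le\frac{rs}{s-1}-r=\frac{r}{s-1}$, again the required estimate. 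This second case is where the redundancy $s\le r$ is genuinely used and is, I expect, the main obstacle: capping at $r$ must not push the residual outside the reachable band, and this is guaranteed exactly by the chain $s^n y_{n-1}\le\frac{rs}{s-1}$.

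Once the invariant holds for all $n$, convergence is automatic: the residuals satisfy $0\le y_n\le\frac{r}{s^n(s-1)}\to 0$, so $S_n\to x$, and therefore $x=\sum_{n=1}^{\infty}s^{-n}\alpha_n$ with every $\alpha_n\in A_r$, i.e.\ $(\alpha_n)\in L_r$; this is the desired $r_s$-representation~\eqref{v2}. I note that this constructive route simultaneously recovers the conclusion of Kakeya's theorem quoted above, namely that $E$ fills the whole interval $[0;\frac{r}{s-1}]$, and makes the digit-extraction algorithm explicit, which is the secondary aim announced in the statement. (Existence is all that is claimed here; because the alphabet is redundant, the greedy sequence is in general only one of several representations of $x$.)
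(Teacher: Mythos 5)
Your proof is correct and follows essentially the same inductive scheme as the paper: extract one digit at a time while maintaining the invariant $0\le x-S_n\le \frac{r}{s^n(s-1)}$ and conclude from the remainder tending to zero. The only difference is cosmetic — you fix a specific greedy digit $\alpha_n=\min\{r,\lfloor s^n y_{n-1}\rfloor\}$ and verify the invariant by a two-case computation, whereas the paper establishes the existence of \emph{some} admissible digit by covering $[0;\frac{r}{s^{n-1}(s-1)}]$ with the overlapping intervals $[\frac{i}{s^n};\frac{(s-1)i+r}{s^n(s-1)}]$; both hinge on the same inequality $r\ge s$.
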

 \begin{proof}
  Since $x\in [0;\frac{r}{s-1}]$ and
\[[0;\frac{r}{s-1}]=[\frac{0}{s};\frac{r}{s(s-1)}]\cup[\frac{1}{s};\frac{s-1+r}{s(s-1) }]\cup[\frac{2}{s};\frac{2(s-1)+r}{s(s-1)}]\cup\ldots
\cup[\frac{r}{s};\frac{rs}{s(s-1) }],\]
there exists (in general, not uniquely) $\alpha_1\in A_r$ such that
\[x\in[\frac{\alpha_1}{s};\frac{(s-1)\alpha_1+r}{s(s-1)}]\Leftrightarrow \frac{\alpha_1}{s}\leq x\leq \frac{2\alpha_1+r}{s(s-1)}.\]
Hence
$$0\leq x-\frac{\alpha_1}{s}\equiv x_1\leq\frac{r}{s(s-1)}.$$
If $x_1=0$, then $x=\frac{\alpha_1}{s}+\frac{0}{s^2}+\ldots+\frac{0}{s^n}+\ldots=\Delta^{r_s}_{\alpha_1(0)}$.
If $x_1\neq 0$, then $x=\frac{\alpha_1}{s}+x_1$, where $x_1\in [0;\frac{r}{s(s-1)}]$ and the procedure is repeated for $x_1$.

Since $x_1\in [0;\frac{r}{s(s-1)}]$, and
$$[0;\frac{r}{s(s-1)}]=[\frac{0}{s^2};\frac{r}{s^2{(s-1)}}]\cup
[\frac{1}{s^2};\frac{s-1+r}{s^2 {(s-1)}}]\cup\ldots\cup[\frac{r}{s^2};\frac{r(s-1)+r}{s^2(s-1)}],$$ then
 it follows that such a thing exists $\alpha_2\in A_r$, that 
\[x_1\in[\frac{\alpha_2}{s^2};\frac{2\alpha_2+r}{s^2 {(s-1)}}]\Leftrightarrow
\frac{\alpha_2}{s^2}\leq x_1\leq \frac{2\alpha_2+r}{s^2{(s-1)}}.\]
Thus $0\leq x_1-\frac{\alpha_2}{s^2}\equiv x_2\leq\frac{r}{s^2{(s-1)}}$,
and
$$x=\frac{\alpha_1}{s}+x_1=\frac{\alpha_1}{s}+\frac{\alpha_2}{s^2}+x_2, \mbox{ where } x_2\in [0;\frac{r}{s^2{(s-1)}}].$$
Proceeding inductively, after $k$ steps we obtain
\[x=\frac{\alpha_1}{s}+\frac{\alpha_2}{s^2}+\ldots+\frac{\alpha_k}{s^k}+x_k, \mbox{ where } x_k\in [0;\frac{r}{s^k{(s-1)}}].\]
If $x_k=0$,  then $x=\Delta^{r_s}_{\alpha_1\alpha_2...\alpha_k(0)}$. If $x_k\neq 0$, the decomposition is continued with $x_k$. Thus, after a finite number of steps we obtain digits $\alpha_1,...,\alpha_n$ and a remainder $x_n\in[0;\frac{r}{s^n{(s-1)}}]$. If $x_n=0$, then
$x=\Delta^{r_s}_{\alpha_1\alpha_2...\alpha_n(0)}$ otherwise the decomposition process continues indefinitely. The convergence of the procedure is guaranteed by the fact that $\frac{r}{s^n{(s-1
)}}\to 0$ as $n\to \infty$. This completes the proof.
\end{proof}
\begin{remark}
  The above proof of Theorem~\ref{th1} is largely geometric. It clarifies the geometric meaning of the digits of the $r_s$-representation obtained by the described algorithm.
 \end{remark}

\section{The number of $r_s$-representations of a number and $r_s$-rational numbers}
The redundancy of the alphabet (i.e., the use of more digits than in the classical $s$-ary numeral system) leads to the non-uniqueness of $r_s$-representations of numbers. Hence, the problem of counting $r_s$-representations is natural and relevant in many respects. This topic has been addressed in~\cite{gonmyk,MykPrats_CH,PratsVynnO}. In this paper, we refine several results.

It is evident that the equalities
$\Delta^{r_s}_{\alpha_1...\alpha_k...}=\Delta^{r_s}_{c_1...c_k...}$ and
$\sum\limits_{k=1}^{\infty}s^{-k}(\alpha_k-c_k)=0$ are equivalent.

If $\frac{a}{s^k}+\frac{b}{s^{k+1}}=\frac{c}{s^k}+\frac {d}{s^{k+1}}$, which is equivalent to $as+b=cs+d$, then the pairs $(a,b)$ and $(c,d)$ of consecutive digits in the $r_s$-representation of a number are interchangeable. We symbolically denote this by ${(a,b)}\leftrightarrow {(c,d)}.$

\begin{remark}
  The pairs ${(c,d)}\leftrightarrow {(c+1,d-s)}$ and ${(c,d)}\leftrightarrow {(c-1,d+s)}$ are interchangeable provided that $\{c,d, c+1,d-s\} \subset A_r$ and $\{c,d,c-1,d+s\} \subset A_r$, respectively.
\end{remark}
\begin{lemma}
  If $s\leq r\leq2s-1$, then the number of interchangeable pairs of digits in $r_s$-representations is given by the formula
  \begin{equation}\label{eq:N}
    l=r(r-s+1).
  \end{equation}
\end{lemma}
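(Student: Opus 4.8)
The plan is to reduce the relation $(a,b)\leftrightarrow(c,d)$ to its elementary form and then count directly. By definition, $(a,b)\leftrightarrow(c,d)$ holds precisely when $as+b=cs+d$ with $(a,b)\neq(c,d)$ and all four digits lying in $A_r=\{0,1,\dots,r\}$. Rewriting this equation as $s(a-c)=d-b$, I would first observe that the left-hand side is a multiple of $s$, so $s\mid(d-b)$, and since $a,c\in\{0,\dots,r\}$ we have $|a-c|\le r$, whence $|d-b|=s\,|a-c|$.

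The key step is to exploit the hypothesis $r\le 2s-1$. Since every digit lies in $\{0,\dots,r\}$, we have $|d-b|\le r\le 2s-1<2s$, and because $|d-b|=s\,|a-c|$ this forces $|a-c|\le 1$. The case $|a-c|=0$ returns the trivial coincidence $(a,b)=(c,d)$, so every genuinely interchangeable pair satisfies $|a-c|=1$ and $|d-b|=s$. In other words, the only interchangeable pairs are the elementary ones described in the Remark, namely $\{(c,d),(c+1,d-s)\}$; no longer jumps such as $(c,d)\leftrightarrow(c+2,d-2s)$ can occur, for they would require $d\ge 2s>r$, which is impossible under $r\le 2s-1$.

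It then remains to count the pairs $\{(c,d),(c+1,d-s)\}$, which I parametrize by the member $(c,d)$ having the smaller first coordinate (equivalently, the larger second coordinate); this sets up a bijection between interchangeable pairs and admissible $(c,d)$, so no double counting occurs. Membership of all four digits in $A_r$ imposes $0\le c$ and $c+1\le r$, i.e.\ $c\in\{0,1,\dots,r-1\}$, together with $d-s\ge 0$ and $d\le r$, i.e.\ $d\in\{s,s+1,\dots,r\}$. These two ranges are independent, giving $r$ choices for $c$ and $r-s+1$ choices for $d$, so the number of admissible pairs is $r(r-s+1)$, which is exactly~\eqref{eq:N}.

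The main obstacle is essentially bookkeeping rather than depth: one must make sure the bound $|a-c|\le 1$ is correctly derived from $r\le 2s-1$ (this is precisely where the upper bound on $r$ enters, and it would fail once $r\ge 2s$, producing additional pairs with $|a-c|=2$), and one must fix a canonical representative of each unordered pair so that every interchange is counted once and only once. A small consistency check, e.g.\ $s=r=2$ giving $l=2$ or $s=3,\ r=4$ giving $l=8$, confirms the formula.
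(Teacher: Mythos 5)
Your proof is correct and follows essentially the same route as the paper: identify the interchangeable pairs as the elementary ones $(c,d)\leftrightarrow(c+1,d-s)$ and count the two free parameters independently ($r$ choices times $r-s+1$ choices). You actually go slightly further than the paper, which merely asserts that all interchangeable pairs have this elementary form, whereas you derive $|a-c|\le 1$ from the hypothesis $r\le 2s-1$ — a justification the paper omits.
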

\begin{proof}
  1. If $s=2=r$, then there are two interchangeable pairs: $02\leftrightarrow 10$, $12\leftrightarrow 20$.

2. If $s=3=r$, then there are three interchangeable pairs: $\overline{13}\leftrightarrow\overline{20}$, $\overline{23}\leftrightarrow\overline{30}$.

  3. If $s=3=r-1$, then there are 8 interchangeable pairs: $\overline{03}\leftrightarrow\overline{10}$,
  $\overline{13}\leftrightarrow\overline{20}$, $\overline{23}\leftrightarrow\overline{30}$, $\overline{33}\leftrightarrow\overline{40}$,
  $\overline{04}\leftrightarrow\overline{11}$, $\overline{14}\leftrightarrow\overline{21}$, $\overline{24}\leftrightarrow\overline{31}$,
  $\overline{34}\leftrightarrow\overline{41}$. This can be verified by direct checking.

  In the general case, the interchangeable pairs have the form
\[(j,s+i)\leftrightarrow(j+1,i), \mbox{ where } 0\leq j\leq r-1, 0\leq i\leq r-s\]
  Here $j$ and $i$ independently take $r$ and $r-s+1$ possible values, respectively. Hence, the total number of interchangeable pairs equals $l=r(r-s+1)$. The lemma is proved.
\end{proof}
\begin{corollary}
  If $s=r$, then the number of permutations equals $r$.
\end{corollary}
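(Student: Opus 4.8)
The plan is to obtain the statement as an immediate specialization of the formula $l = r(r-s+1)$ established in the preceding Lemma. Setting $s = r$ yields $r - s + 1 = 1$, and therefore $l = r \cdot 1 = r$, which is exactly the asserted count. Since the Lemma is assumed proved, this substitution already constitutes a complete argument; what remains is only to confirm that the quantity counted in the Corollary (the ``permutations'', i.e.\ the mutually interchangeable pairs of consecutive digits) is the same quantity $l$ counted in the Lemma.

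To make the count transparent and self-contained, I would alternatively re-derive it directly from the description of interchangeable pairs given in the Lemma's proof, namely $(j, s+i) \leftrightarrow (j+1, i)$ with $0 \le j \le r-1$ and $0 \le i \le r-s$. Under the hypothesis $s = r$ the range of $i$ collapses, since $0 \le i \le r - s = 0$ forces $i = 0$. Consequently the only admissible interchangeable pairs are $(j, r) \leftrightarrow (j+1, 0)$ for $j = 0, 1, \ldots, r-1$, and these are pairwise distinct; counting them gives precisely $r$ pairs, in agreement with the formula.

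The only point that needs a brief verification---and the closest thing to an ``obstacle'', though it is essentially trivial---is that each such pair genuinely lies in the alphabet $A_r = \{0,1,\ldots,r\}$. For the pair $(j,r) \leftrightarrow (j+1,0)$ one checks the membership conditions from the Remark preceding the Lemma with $c = j$, $d = r$: here $c+1 = j+1 \le r$ and $d - s = r - r = 0$, so $\{j, r, j+1, 0\} \subset A_r$ whenever $0 \le j \le r-1$. Each such swap is legitimate because $j s + r = (j+1)s + 0$, which holds exactly when $r = s$. Thus no pair is lost or double-counted, and the total is exactly $r$. I expect no genuine difficulty: the Corollary is simply the boundary case $r = s$ of the Lemma, in which the alphabet redundancy is minimal and only the ``carry-type'' swaps $(j,r)\leftrightarrow(j+1,0)$ survive.
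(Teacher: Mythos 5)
Your proposal is correct and takes the same route as the paper: the Corollary is obtained by substituting $s=r$ into the Lemma's formula $l=r(r-s+1)$, which the paper treats as immediate. Your additional direct enumeration of the surviving pairs $(j,r)\leftrightarrow(j+1,0)$, $0\le j\le r-1$, with the alphabet-membership check, is a sound (if not strictly necessary) confirmation consistent with the Lemma's proof.
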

Note that when $r=2s$, the following chains occur: ${(j,2s)}\leftrightarrow {(j+1,s)}\leftrightarrow {(j+2,0)}$,
$0\leq j\leq 2s-2$.
\begin{definition}
  A number that has an $r_s$-representation with period $(0)$ is called \emph{$r_s$-rational}.
Clearly, every $r_s$-rational number is rational.
\end{definition}

The number $x=\Delta^{r_s}_{c_1...c_{k-1}(s-1)}$ is $r_s$-rational, since
 $x=\Delta^{r_s}_{c_1...c_{k-1}s(0)}$.

Not every rational number is $r_s$-rational. This is illustrated by the following example. The number  $x=\Delta^{r_s}_{(r-s+1,r-s+2)}=\frac{(r-s)(s+1)+s+2}{s^2-1}$ is rational, but not $r_s$-rational, since this number has a unique $r_s$-representation~\cite{PratsVynnO}.
 \begin{lemma}
If some $r_s$-representation of a number $x$ is periodic, then $x$ is rational.
\end{lemma}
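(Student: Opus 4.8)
The plan is to exploit the fact that, by definition~\eqref{v1}, the value $x$ is the sum of the series $\sum_{n=1}^{\infty}s^{-n}\alpha_n$, that is, a base-$s$ expansion with the enlarged alphabet $A_r$; periodicity of the digit sequence therefore reduces the computation of $x$ to summing a geometric series. First I would make precise what a \emph{periodic} $r_s$-representation means: the sequence $(\alpha_n)$ is eventually periodic, so there exist a pre-period length $m\geq 0$ and a period length $p\geq 1$ with $\alpha_{n+p}=\alpha_n$ for every $n>m$.

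Next I would split the defining series at the end of the pre-period,
\[
x=\sum_{n=1}^{m}\frac{\alpha_n}{s^n}+\sum_{n=m+1}^{\infty}\frac{\alpha_n}{s^n}.
\]
The first sum is a finite sum of rationals, hence rational. For the periodic tail I would group the terms into consecutive blocks of length $p$ and factor out the successive powers of $s^{-p}$, using $\alpha_{m+kp+j}=\alpha_{m+j}$ for all $k\geq 0$ and $1\leq j\leq p$. Writing $B=\sum_{j=1}^{p}s^{-j}\alpha_{m+j}$ for the (rational) value contributed by a single period block, the tail equals
\[
s^{-m}B\sum_{k=0}^{\infty}\bigl(s^{-p}\bigr)^{k}=s^{-m}B\cdot\frac{s^{p}}{s^{p}-1},
\]
which is again rational because $s\geq 2$ guarantees $0<s^{-p}<1$ and thus convergence of the geometric series.

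Adding the two rational contributions shows that $x$ is rational, completing the argument. I do not expect any genuine obstacle here: the statement is an immediate consequence of the geometric structure of~\eqref{v1}, and the only point requiring a little care is the index bookkeeping when the tail is regrouped into period blocks (in particular the case $m=0$ of a purely periodic representation, where the pre-period sum is empty).
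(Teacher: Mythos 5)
Your proposal is correct and follows essentially the same route as the paper: split off the finite pre-period, group the periodic tail into blocks of length $p$, and sum the resulting geometric series to get the rational factor $\frac{s^{p-m}}{s^{p}-1}$ multiplying the block sum. No gaps; the index bookkeeping you flag is exactly the only delicate point, and you handle it correctly.
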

 \begin{proof}
  Indeed,
  \begin{align*}
   x=\Delta^{r_s}_{\alpha_1...\alpha_m(a_1...a_p)}&=\sum\limits_{i=1}^{m}\frac{\alpha_i}{s^{i}}+
  \frac{1}{s^m}\sum\limits_{i=1}^{p}\frac{a_i}{s^i}+\frac{1}{s^{m+p}}\sum\limits_{i=1}^{p}\frac{a_i}{s^i}+...=
  \sum\limits_{j=1}^{m}\frac{\alpha_j}{s^j}+\sum\limits_{i=1}^{p}\frac{a_i}{s^i}
  \sum\limits_{i=1}^{\infty}\frac{1}{s^{m+ip}}=\\
    &=\sum\limits_{j=1}^{m}\frac{\alpha_j}{s^j}+\frac{s^{p-m}}{s^p-1}\sum\limits_{j=1}^{p}\frac{a_j}{s^{j}}.\qedhere
  \end{align*}
\end{proof}
\begin{lemma}
If a natural number $p$ satisfies $p\leq s$ and $0\leq ps-1\leq r$, then the equality
  \begin{equation}\label{par}
    \frac{s-1}{s^k}+\frac{s-1}{s^{k+1}}=\frac{s-p}{s^k}+\frac{ps-1}{s^{k+1}}
  \end{equation}
 holds and ${(s-1,s-1)}\leftrightarrow {(s-p,ps-1)},$
  in particular ${(s-1,s-1)}\leftrightarrow {(s-2,2s-1)}$.
\end{lemma}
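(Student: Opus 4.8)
The plan is to reduce everything to the algebraic criterion recorded just before the first lemma of Section~3: the identity $\frac{a}{s^k}+\frac{b}{s^{k+1}}=\frac{c}{s^k}+\frac{d}{s^{k+1}}$ is equivalent to $as+b=cs+d$, and the corresponding pairs of consecutive digits are interchangeable precisely when, in addition, all four digits lie in the alphabet $A_r$. So the proof will have two independent ingredients: an algebraic identity and a membership check.

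First I would verify the numerical identity~\eqref{par}. Setting $a=b=s-1$, $c=s-p$, $d=ps-1$, it suffices to check $as+b=cs+d$. The left-hand side is $(s-1)s+(s-1)=s^2-1$, while the right-hand side is $(s-p)s+(ps-1)=s^2-ps+ps-1=s^2-1$. Since both equal $s^2-1$, the identity holds for every $p$; notably, this algebraic step needs no hypothesis on $p$ at all.

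Second, I would check that the swap stays inside $A_r$, which is exactly where the hypotheses $p\leq s$ and $0\leq ps-1\leq r$ enter. The digit $s-1$ satisfies $1\leq s-1<s\leq r$, so $s-1\in A_r$. Since $p$ is natural, $1\leq p\leq s$ gives $0\leq s-p\leq s-1\leq r$, so $s-p\in A_r$. Finally, $ps-1\in A_r$ is precisely the assumption $0\leq ps-1\leq r$. With the algebraic identity together with the containment of all four digits in $A_r$, the interchangeability criterion yields ${(s-1,s-1)}\leftrightarrow{(s-p,ps-1)}$.

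For the closing special case, I would simply put $p=2$: then $s-p=s-2$ and $ps-1=2s-1$, giving ${(s-1,s-1)}\leftrightarrow{(s-2,2s-1)}$, which is legitimate as soon as $2s-1\leq r$, i.e. the instance $p=2$ of the standing hypothesis $0\leq ps-1\leq r$. I do not anticipate a genuine obstacle here: the whole argument is a direct verification. The only point requiring care — the conceptual ``hard part,'' such as it is — is remembering that interchangeability demands \emph{both} the equality of the two two-digit blocks \emph{and} the containment of every digit in the redundant alphabet, so that the role of the hypotheses is solely to guarantee this containment rather than the identity, which holds unconditionally.
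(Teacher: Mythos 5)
Your proposal is correct and follows essentially the same route as the paper, which simply asserts that the identity~\eqref{par} holds for every $k$ and leaves the verification implicit. Your version fills in the two routine details (both sides of $as+b=cs+d$ equal $s^2-1$, and the hypotheses on $p$ serve only to keep the digits $s-p$ and $ps-1$ inside $A_r$), which is exactly what the paper's interchangeability criterion requires.
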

 Indeed, for any $k \in N$, the equality~\eqref{par} holds.
It can be shown that for $r\geq2s-1$, every $r_s$-rational number admits a continuum of distinct representations. A more general statement, proved below, is also valid.
\section{The geometry of representations: cylinder sets}
\begin{definition}
An \emph{cylinder} (or \emph{$r_s$-cylinder}) of rank $m$ with a base $c_1c_2...c_m$ is the set $\Delta^{r_s}_{c_1c_2...c_m}$ of all number $x\in[0;\frac{r}{s-1}]$, that admit $r_s$-representation $\Delta^{r_s}_{\alpha_1\alpha_2...\alpha_k...}$ such that
  $\alpha_i=c_i$, $i=\overline{1,m}$.
\end{definition}
It follows immediately from the definition that
$\Delta^{r_s}_{c_1...c_m}=\Delta^{r_s}_{c_1...c_m0}\cup\Delta^{r_s}_{c_1...c_m1}\cup...\cup\Delta^{r_s}_{c_1...c_mr}.$

The cylinder $\Delta^{r_s}_{c_1...c_m}$ is called \emph{primary} (or \emph{parent}) cylinder with respect to the cylinders $\Delta^{r_s}_{c_1...c_mi}$, $i=\overline{1,r}$.

 Two cylinders $\Delta^{r_s}_{c_1...c_mi}$ and $\Delta^{r_s}_{c_1...c_m{[i+1]}}$ are called \emph{adjacent}. Clearly,
  $$\min\Delta^{r_s}_{c_1...c_{k-1}i}<\min\Delta^{r_s}_{c_1...c_{k-1}[i+1]},
\max\Delta^{r_s}_{c_1...c_{k-1}i}<\min\Delta^{r_s}_{c_1...c_{k-1}[i+1]}.$$

1. It is easy to prove that the cylinder $\Delta^{r_s}_{c_1...c_m}$ is an interval
$[a,a+d]$, where $a=\sum\limits_{i=1}^{m}\frac{c_i}{s^i}$, $d=\frac{r}{s^{m}{(s-1)}}$.

Therefore, the length of an $r_s$-cylinder is given by $|\Delta^{r_s}_{c_1...c_m}|=\frac{r}{s^m{(s-1)}}$ and and depends only on the rank $m$ and  not on the base. Consequently, cylinders of different ranks cannot coincide.

2. Two cylinders $\Delta^{r_s}_{c_1...c_m}$ and $\Delta^{r_s}_{d_1...d_m}$ of the same rank coincide if and only if their left endpoints coincide, that is, $\sum\limits_ {i=1}^{m}s^{-i} {(c_i-d_i)=0}$, and this is possible only when the block $(d_1,...,d_m)$ can be obtained from the block  $(c_1,...,c_m)$ by a chain of substitutions of a pair of consecutive digits by an alternative pair.
  
3. Since $\frac{r}{s^m{(s-1)}}\to 0$ as $m\to \infty$, it follows that for any sequence $(c_m)\in L_r$ the equality
\[\bigcap\limits_{m=1}^{\infty}\Delta^{r_s}_{c_1...c_m}=\Delta^{r_s}_{c_1c_2...c_m...}\]
holds. Therefore, each point of the interval $[0,\frac{r}{s-1}]$  can be regarded as an $r_s$-cylinder of infinite rank.

4. Adjacent cylinders overlap, and \[\Delta^{r_s}_{c_1...c_{k-1}i}\cap \Delta^{r_s}_{c_1...c_{k-1}[i+1]}=
[\Delta^{r_s}_{c_1...c_{k-1}[i+1](0)};\Delta^{r_s}_{c_1...c_{k-1}i(r)}].\]

The length of the overlap of adjacent cylinders is $\delta\equiv \frac{r-s+1}{s^k {(s-1)}}$.

5. The ratio of the lengths of the overlap and the length of the parent cylinder is $\frac{\delta}{|\Delta^{r_s}_{c_1...c_{k-1}}|}=\frac{r-s+1}{sr}$.
\begin{lemma}
  The intersection of two adjacent cylinders of rank $k$ is a cylinder of rank $k+p$,
  $$\Delta^{r_s}_{c_1...c_{k-1}i}\cap\Delta^{r_s}_{c_1...c_{k-1}[i+1]}=
  \Delta^{r_s}_{c_1...c_{k-1}i\underbrace{r...r}_p}=\Delta^{r_s}_{c_1...c_{k-1}[i+1]\underbrace{0...0}_p},$$
if and only if $r=\frac{s^p(s-1)}{s^p-1} $.
\end{lemma}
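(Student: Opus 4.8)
The plan is to collapse the whole statement to a single arithmetic identity by working with lengths, using two facts from this section: every $r_s$-cylinder of rank $m$ is a closed interval of length $\frac{r}{s^m(s-1)}$ depending only on the rank (item~1), and the overlap of two adjacent rank-$k$ cylinders is the closed interval $[\Delta^{r_s}_{c_1...c_{k-1}[i+1](0)};\Delta^{r_s}_{c_1...c_{k-1}i(r)}]$ of length $\delta=\frac{r-s+1}{s^k(s-1)}$ (item~4).

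First I would record a structural observation that holds unconditionally: the candidate cylinder $\Delta^{r_s}_{c_1...c_{k-1}i\underbrace{r...r}_p}$ shares its right endpoint with the overlap, and $\Delta^{r_s}_{c_1...c_{k-1}[i+1]\underbrace{0...0}_p}$ shares its left endpoint with the overlap. Indeed, the maximum of $\Delta^{r_s}_{c_1...c_{k-1}i\underbrace{r...r}_p}$ is attained on the all-$r$ tail, i.e. on the sequence with digits $c_1...c_{k-1}i$ followed by $r$'s, which is exactly $\Delta^{r_s}_{c_1...c_{k-1}i(r)}=\max\Delta^{r_s}_{c_1...c_{k-1}i}$; symmetrically, the minimum of $\Delta^{r_s}_{c_1...c_{k-1}[i+1]\underbrace{0...0}_p}$ is $\Delta^{r_s}_{c_1...c_{k-1}[i+1](0)}$. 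Thus in each of the two displayed candidate cylinders one endpoint already coincides with the corresponding endpoint of the overlap, regardless of any relation among $r,s,p$.

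For the direction ``only if'' I would argue purely by length. If the overlap is any cylinder of rank $k+p$, then by item~1 its length equals $\frac{r}{s^{k+p}(s-1)}$, while by item~4 it equals $\delta=\frac{r-s+1}{s^k(s-1)}$; equating and clearing denominators gives $(r-s+1)s^p=r$, i.e. $r(s^p-1)=s^p(s-1)$, which is $r=\frac{s^p(s-1)}{s^p-1}$. For the direction ``if'', I would assume this relation and run the computation backwards to obtain $\delta=\frac{r}{s^{k+p}(s-1)}$. Since the closed interval $\Delta^{r_s}_{c_1...c_{k-1}i\underbrace{r...r}_p}$ already shares the right endpoint of the overlap and now has the same length, the two closed intervals coincide; the symmetric argument with the shared left endpoint identifies the overlap with $\Delta^{r_s}_{c_1...c_{k-1}[i+1]\underbrace{0...0}_p}$ as well, yielding the full chain of three equal sets.

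There is no serious obstacle here: the only points requiring care are the elementary remark that coincidence of one endpoint together with coincidence of length forces two closed intervals to be identical, and the consistency check that the equality of the two candidate cylinders is, by item~2, equivalent to the coincidence of their left endpoints, namely $is^p+r(s^{p-1}+\cdots+1)=(i+1)s^p$ --- and this is once more the identity $r(s^p-1)=s^p(s-1)$. Hence the entire lemma reduces to this single equation.
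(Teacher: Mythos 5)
Your proof is correct and rests on the same core computation as the paper's: equating the overlap length $\frac{r-s+1}{s^k(s-1)}$ with the rank-$(k+p)$ cylinder length $\frac{r}{s^{k+p}(s-1)}$ to get $r=\frac{s^p(s-1)}{s^p-1}$. In fact you go slightly further than the paper, which stops at the length comparison; your observation that each candidate cylinder already shares an endpoint with the overlap (so that equal length forces the closed intervals to coincide) is exactly the step needed to upgrade ``same length'' to the actual set equality asserted in the statement, and your endpoint consistency check is sound.
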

\begin{proof}
  A cylinder of rank $k+p$ has length $\frac{r}{s^{k+p}{(s-1)}}$, whereas the intersection
  of two adjacent cylinders $\Delta^{r_s}_{c_1...c_ki}\cap\Delta^{r_s}_{c_1...c_k[i+1]}$
  has length $\frac{r-s+1}{s^k{(s-1)}}$. Hence, the intersection is a cylinder of rank $k+p$
 if only if $\frac{r-s+1}{s^k{(s-1)}}=\frac{r}{s^{k+p}{(s-1)}}$, 
  which is equivalent to $r=\frac{s^p(s-1)}{s^p-1} $.
\end{proof}
\section{Main object}
Let $\Delta^{r+1}_{\alpha_1\alpha_2...\alpha_n...}$ be the $(r+1)$-ary representation of the number $x\in{[0;\frac{s}{r-1}]}$ in the classical positional numeral system with base $r+1$, that is, $$x=\Delta^{r+1}_{\alpha_1\alpha_2...\alpha_n...}=\sum\limits_{n=1}^{\infty}\frac{\alpha_n}{(r+1)^n}.$$
Numbers that admit two $(r+1)$-representations: $\Delta^{r+1}_{\alpha_1\alpha_2...\alpha_{n-1}\alpha_n(0)}=
\Delta^{r+1}_{\alpha_1\alpha_2...\alpha_{n-1}[\alpha_n-1](r)}$, are called $(r+1)$-\emph{binary}, , while the remaining numbers, which have a unique representation, are called $(r+1)$-\emph{unary}. By agreement, for each $(r+1)$-binary number we use only the representation with the period $(0)$. This convention ensures uniqueness of representation and guarantees the well-definedness of the function $f$ given by 
  \begin{equation}\label{df}
    f(x=\Delta^{r+1}_{\alpha_1\alpha_2...\alpha_n...})=\Delta^{r_s}_{\alpha_1\alpha_2...\alpha_n...},\;\;
    f(\Delta^{r+1}_{(r)})=1.
  \end{equation}
\begin{theorem}
The function $f$ is continuous at every $(r+1)$-unary point and discontinuous at every $(r+1)$-binary point. Moreover, the jump of $f$ at a $(r+1)$-binary point of rank $m$ equals
    \[\delta_m=\frac{r-s+1}{s^m(s-1)}.\]
\end{theorem}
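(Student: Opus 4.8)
The plan is to exploit the single structural feature of $f$: it keeps the symbolic digit string fixed and only switches its interpretation from the base-$(r+1)$ system to the $r_s$-system. Consequently $f$ maps the classical cylinder $\Delta^{r+1}_{c_1\ldots c_m}$ (the set of numbers whose first $m$ base-$(r+1)$ digits are $c_1\ldots c_m$) into the $r_s$-cylinder $\Delta^{r_s}_{c_1\ldots c_m}$, which by the previous section is an interval of length $\frac{r}{s^{m}(s-1)}\to 0$. Thus controlling the leading base-$(r+1)$ digits of points near a given point controls the oscillation of $f$. The decisive geometric fact is that the two endpoints of every base-$(r+1)$ cylinder are precisely the $(r+1)$-binary numbers; hence a $(r+1)$-unary $x$ lies in the \emph{open interior} of every cylinder containing it, while a binary point sits on cylinder boundaries, which will produce the jump.

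For continuity at a unary point $x=\Delta^{r+1}_{\alpha_1\alpha_2\ldots}$, I would first fix $m$. Since $x$ is interior to $\Delta^{r+1}_{\alpha_1\ldots\alpha_m}$, there is $\eta>0$ with $(x-\eta,x+\eta)\subset \Delta^{r+1}_{\alpha_1\ldots\alpha_m}$. Every such $y$ shares the first $m$ digits with $x$, so both $f(x)$ and $f(y)$ lie in $\Delta^{r_s}_{\alpha_1\ldots\alpha_m}$, giving $|f(y)-f(x)|\le \frac{r}{s^{m}(s-1)}$. As $m\to\infty$ this bound tends to $0$, which is exactly continuity at $x$.

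For the jump at a binary point of rank $m$, write $x=\Delta^{r+1}_{\alpha_1\ldots\alpha_m(0)}$ with $\alpha_m\ge 1$, so by the period-$(0)$ convention $f(x)=\sum_{i=1}^{m}\alpha_i s^{-i}$. I would compute the one-sided limits through shrinking cylinders. To the right, for $y\in\bigl(x,\,x+(r+1)^{-(m+k)}\bigr)$ the first $m+k$ base-$(r+1)$ digits are $\alpha_1\ldots\alpha_m\underbrace{0\cdots0}_{k}$, so $f(y)$ lies in $\Delta^{r_s}_{\alpha_1\ldots\alpha_m\underbrace{0\cdots0}_{k}}$, an interval with left endpoint $f(x)$ and length $\frac{r}{s^{m+k}(s-1)}\to 0$; hence $f(x^{+})=f(x)$ and $f$ is right-continuous. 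To the left, for $y\in\bigl(x-(r+1)^{-(m+k)},\,x\bigr)$ the carry identity makes the first $m+k$ digits equal to $\alpha_1\ldots\alpha_{m-1}(\alpha_m-1)\underbrace{r\cdots r}_{k}$ (the truncated period-$(r)$ representation of $x$), so $f(y)$ lies in a cylinder whose left endpoint converges to $L=\sum_{i=1}^{m-1}\frac{\alpha_i}{s^{i}}+\frac{\alpha_m-1}{s^{m}}+\frac{r}{s^{m}(s-1)}$ while its length tends to $0$; hence $f(x^{-})=L$. Subtracting gives $f(x^{-})-f(x^{+})=\frac{r}{s^{m}(s-1)}-\frac{1}{s^{m}}=\frac{r-s+1}{s^{m}(s-1)}=\delta_m>0$, which yields both the discontinuity and the stated value of the jump.

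The main obstacle is purely the bookkeeping with the two base-$(r+1)$ representations and the period-$(0)$ convention: one must justify exactly which digit word governs the leading digits of points approaching $x$ from each side, and verify that the $r_s$-images genuinely sit inside the asserted cylinders, so that the squeeze by cylinder length pins down each one-sided limit. As a consistency check, $\delta_m$ coincides with the length $\frac{r-s+1}{s^{k}(s-1)}$ of the overlap of adjacent cylinders found in item~4 of the cylinder section; the jump of $f$ is thus precisely the amount by which adjacent $r_s$-cylinders overlap, which is the intrinsic reason the collapse at a binary point has exactly this magnitude.
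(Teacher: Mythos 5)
Your proof is correct and follows essentially the same route as the paper: continuity at $(r+1)$-unary points via the fact that agreement of the first $m$ base-$(r+1)$ digits forces both images into an $r_s$-cylinder of length $\frac{r}{s^{m}(s-1)}\to 0$, and the jump at a binary point as the difference between the $r_s$-values of its two $(r+1)$-representations, yielding $\frac{r}{s^{m}(s-1)}-\frac{1}{s^{m}}=\frac{r-s+1}{s^{m}(s-1)}$. If anything, your write-up is slightly more careful than the paper's: the interior-of-cylinder argument justifies the step the paper asserts only as ``$k\to\infty$ is equivalent to $x\to x_0$'', and you compute genuine one-sided limits rather than a limit along a single sequence of truncations.
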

\begin{proof}
Consider an arbitrary $(r+1)$-unary point $x_0=\Delta^{r+1}_{\alpha_1\alpha_2...\alpha_n...}$ and $f(x_0)=\Delta^{r_s}_{\alpha_1\alpha_2...\alpha_n...}$. If $x\neq x_0$, then there exists an index $k$ such that $\alpha_k(x)\neq\alpha_k(x_0)$, while $\alpha_i(x)=\alpha_i(x_0)$ for all $i<k$. The condition $k\to\infty$ is equivalent to $x\to x_0$. To establish continuity of $f$ at $x_0$, we show that
  \[\lim\limits_{x\to x_0}|f(x)-f(x_0)|=0.\]
By the definition of $f$
  \begin{align*}
    \lim\limits_{x\to x_0}|f(x)-f(x_0)|=&
    \lim\limits_{x\to x_0}|f(\Delta^{r_s}_{\alpha_1\alpha_2...\alpha_{k-1}\alpha'_k\alpha'_{k+1}...})-
    f(\Delta^{r_s}_{\alpha_1\alpha_2...\alpha_{k-1}\alpha_k\alpha_{k+1}...})|=\\
    =&\lim\limits_{k\to\infty}|\Delta^{r_s}_{\alpha_1\alpha_2...\alpha_{k-1}\alpha'_k\alpha'_{k+1}...}-
    \Delta^{r_s}_{\alpha_1\alpha_2...\alpha_{k-1}\alpha_k\alpha_{k+1}...}|\leq\\
    \leq&\lim\limits_{k\to\infty}\frac{1}{s^{k-1}}|\Delta^{r_s}_{\alpha'_k\alpha'_{k+1}...}-
    \Delta^{r_s}_{\alpha_k\alpha_{k+1}...}|=0.
  \end{align*}
  Hence, $f$ is continuous at every $(r+1)$-unary point.

  Continuity at a $(r+1)$-binary point would require equality of the values determined by \eqref{df} for both representations, that is,
  \[f(\Delta^{r+1}_{\alpha_1\alpha_2...\alpha_{k-1}\alpha_k(0)})=
  f(\Delta^{r+1}_{\alpha_1\alpha_2...\alpha_{k-1}[\alpha_k-1](r)}) \mbox{ for any } k\in N.\]
  This is equivalent to
\[\sum\limits_{i=1}^{k-1}\frac{\alpha_i}{s^i}+\frac{\alpha_k}{s^k}=
  \sum\limits_{i=1}^{k-1}\frac{\alpha_i}{s^i}+\frac{\alpha_k-1}{s^k}+\frac{r}{s^k(s-1)},\]
  which yields
    \[\frac{\alpha_k}{s^k}=
  \frac{\alpha_k-1}{s^k}+\frac{r}{s^k(s-1)}.\]
  This equality is possible only if $s=r+1$. Therefore, $f$ is discontinuous at every $(r+1)$-binary point unless $s=r+1$.

   The jump of $f$ at the $(r+1)$-binary point $\Delta^{r+1}_{c_1...c_{m-1}c_m(0)}=\Delta^{r+1}_{c_1...c_{m-1}[c_m-1](r)}$ of rank $m$ equals
     \begin{align*}\delta_m&=\lim\limits_{k\to \infty}f(\Delta^{r+1}_{c_1...c_{m-1}[c_m-1]\underbrace{r...r}\limits_{k}(0)})-
     f(\Delta^{r+1}_{c_1...c_{m-1}c_m(0)})=\\
     &=\lim\limits_{k\to\infty}
     (\frac{r}{s^{m+1}}+...+\frac{r}{s^{m+k}})-\frac{1}{s^m}
     =\frac{r}{s^{m}(s-1)}-\frac{1}{s^m}=\frac{r-s+1}{s^m(s-1)}.\qedhere
     \end{align*}
\end{proof}
\begin{theorem}
  The function $f$ is nowhere monotone and has unbounded variation.
\end{theorem}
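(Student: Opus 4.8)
The plan is to derive both assertions from two facts already established: the previous theorem shows that at every $(r+1)$-binary point $p$ of rank $m$ the function has a genuine downward jump of size $\delta_m=\frac{r-s+1}{s^m(s-1)}>0$ (positive precisely because $s\le r$ forces $r-s+1\ge1$ and $s\ne r+1$), while the self-affine identity $f(\Delta^{r+1}_{c_1\ldots c_m\alpha_{m+1}\ldots})=\sum_{i=1}^{m}\frac{c_i}{s^i}+\frac{1}{s^m}\,f(\Delta^{r+1}_{\alpha_{m+1}\ldots})$ expresses $f$ on each $(r+1)$-ary cylinder as an affine copy of $f$ on the whole interval. The binary points are dense, and the rank-$m$ cylinder of the base-$(r+1)$ system is an interval of length $(r+1)^{-m}$, so every nondegenerate subinterval of the domain contains a full cylinder of some rank.

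For nowhere monotonicity I would show that on an arbitrary interval $(a,b)$ the function both strictly increases and strictly decreases. The decrease is immediate from the jumps: choosing a binary point $p\in(a,b)$ and a point $q<p$ close enough to $p$, one gets $f(q)>f(p)+\delta_m-\varepsilon>f(p)$, i.e. $f(q)>f(p)$ with $q<p$. For the increase I would use the self-affine identity: for a full cylinder $C=\Delta^{r+1}_{c_1\ldots c_m}\subset(a,b)$, the identity together with $f\ge0$ and $f(y)=0\iff y=0$ shows that $f(\min C)=\sum_{i=1}^{m}c_i/s^i$ is the strict minimum of $f$ over $C$; hence $f(\min C)<f(v)$ for every $v\in C$ with $v\neq\min C$, producing a strict increase inside $(a,b)$. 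Since each interval thus carries both a strict increase and a strict decrease, $f$ is monotone on no interval.

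For unbounded variation I would bound the total variation from below by the total size of the jumps at a single rank $m$ and let $m\to\infty$. Listing the rank-$m$ binary points as $p_1<\cdots<p_{N_m}$ and interlacing each with a point $q_j<p_j$ chosen so that $f(q_j)>f(p_j)+\delta_m-\varepsilon$, the partition $q_1<p_1<q_2<\cdots<q_{N_m}<p_{N_m}$ gives a variation sum at least $\sum_j|f(p_j)-f(q_j)|\ge N_m\delta_m-N_m\varepsilon$, so $V(f)\ge N_m\delta_m$. Counting yields $N_m=r(r+1)^{m-1}$ (the digits $c_1,\ldots,c_{m-1}$ are free and $c_m\in\{1,\ldots,r\}$), whence
\[
V(f)\ \ge\ N_m\delta_m\ =\ \frac{r(r-s+1)}{s(s-1)}\Bigl(\frac{r+1}{s}\Bigr)^{m-1}.
\]
Because $s\le r$ gives $\frac{r+1}{s}>1$, the right-hand side tends to $\infty$ as $m\to\infty$, so $V(f)=\infty$.

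The routine points (the $\varepsilon$-bookkeeping in the interlacing partition and checking that each $q_j$ lands in the correct gap $(p_{j-1},p_j)$) are harmless. The one place that needs care is justifying $f(q_j)\to f(p_j)+\delta_m$: rather than invoking a full one-sided limit, I would take $q_j$ directly from the explicit approximating sequence $\Delta^{r+1}_{c_1\ldots c_{m-1}[c_m-1]\underbrace{r\ldots r}_{k}(0)}$ used in the jump computation of the previous theorem, for which $f(q_j)\to f(p_j)+\delta_m$ is exactly what was shown there. With that choice both proofs go through with only elementary estimates, and the decisive quantitative input is the single inequality $r+1>s$.
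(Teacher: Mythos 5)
Your proof is correct, and the two halves compare differently with the paper's. For unbounded variation you and the paper use the same idea --- lower-bounding the variation by the jumps at $(r+1)$-binary points --- but you execute it slightly differently: the paper sums the jump contributions over all ranks into a geometric series with ratio $r/s\ge 1$ and counts $r^m$ points of rank $m$, while you fix a single rank, exhibit an explicit interlaced partition $q_1<p_1<\cdots<q_{N_m}<p_{N_m}$, and use the sharper count $N_m=r(r+1)^{m-1}$, so that $N_m\delta_m=\frac{r(r-s+1)}{s(s-1)}\bigl(\frac{r+1}{s}\bigr)^{m-1}\to\infty$; your version is more careful about why the variation dominates the jump sum (and your count is actually the correct one, though the paper's undercount is harmless since it only weakens a lower bound that still diverges). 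For nowhere monotonicity the routes genuinely differ: the paper picks three explicit points $x_1=\Delta^{r+1}_{c_1\ldots c_m(0)}$, $x_2=\Delta^{r+1}_{c_1\ldots c_m0r(r-1)}$, $x_3=\Delta^{r+1}_{c_1\ldots c_m1(0)}$ in an arbitrary cylinder and verifies by direct computation that $f(x_2)-f(x_1)>0>f(x_3)-f(x_2)$ (the sign coming from $s(s-r-1)+1<0$), which is self-contained and independent of the discontinuity theorem; you instead get the decrease for free from the downward jump at a dense set of binary points (correctly sidestepping the one-sided-limit issue by reusing the explicit approximating sequence) and the increase from the observation that $f$ attains its strict minimum at the left endpoint of every full cylinder contained in the given interval. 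Both arguments are sound; the paper's is more computational but needs no prior results, yours reuses more structure and makes the mechanism (jumps down, affine copies up) more transparent. One cosmetic caveat: your ``self-affine identity'' should be read off directly from the definition $f(\Delta^{r+1}_{\alpha_1\alpha_2\ldots})=\sum_n\alpha_n s^{-n}$ applied to the canonical representation, since in the paper the functional-equation lemma is only proved in the following section; and it fails literally at the right endpoint of a cylinder (whose canonical representation lives in the adjacent cylinder), which does not affect your use of it but is worth a remark.
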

\begin{proof} 
To prove non-monotonicity it suffices to show it on an arbitrary cylinder of rank $m$. Consider
  $\Delta^{r+1}_{c_1...c_m}=[\Delta^{r+1}_{c_1...c_m(0)};\Delta^{r+1}_{c_1...c_m (r)}]$
 and the points
  $x_1=\Delta^{r+1}_{c_1...c_m(0)}$,
  $x_2=\Delta^{r+1}_{c_1...c_m0r(r-1)}$,
  $x_3=\Delta^{r+1}_{c_1...c_m1(0)}$, belonging to it.
  Clearly, $x_1<x_2<x_3$. Then
  \begin{align*}
  f(x_2)-f(x_1)=&
  \frac{r}{s^{m+2}}+\frac{r-1}{s^{m+2}(s-1)}>0,\\
  f(x_3)-f(x_2)=&
  \frac{1}{s^{m+1}}-\frac{r}{s^{m+2}}-\frac{r-1}{s^{m+2}(s-1)}=\\
  =&\frac{s(s-1)-r(s-1)-r+1}{s^{m+2}(s-1)}=\\
  =&\frac{s(s-r-1)+1}{s^{m+2}(s-1)}<0.
  \end{align*}
  Hence $(f(x_2)-f(x_1))(f(x_3)-f(x_2))<0$, 
  and $f$ is non-monotone on any cylinder, and therefore nowhere monotone on its entire domain.
  
  The total variation of $f$ on $[0,1]$ is not less than the sum of magnitudes of all jump discontinuities at $(r+1)$-binary points. For rank $1$ there are $r$ such $(r+1)$-binary points, giving total jump
$\frac{r(r-s+1)}{s(s-1)}$. For rank $2$ there are $r^2$ points with total jump 
$\frac{r^2(r-s+1)}{s^2(s-1)}$, and so on. Thus, the total jump is the sum of an increasing geometric series with first term 
$\frac{r(r-s+1)}{s(s-1)}$ and ratio $q=\frac{r}{s}$, which diverges. Hence, $f$ has unbounded variation.
\end{proof}
\section{Functional relations and self-affinity of the function graph}
\begin{lemma}
  The function $f$ defined by~\eqref{df} is a solution of the system of functional equations
\[f(\frac{i+x}{r+1})=\frac{i}{s}+\frac{1}{s}f(x),\;\; i=\overline{0,r}.\]
\end{lemma}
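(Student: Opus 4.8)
The plan is to exploit the elementary fact that the affine map $x\mapsto\frac{i+x}{r+1}$ acts on $(r+1)$-ary expansions simply by \emph{prepending} the digit $i$, and that the corresponding prepending on the $r_s$-side is realized by the affine map $y\mapsto\frac{i}{s}+\frac1s y$. Concretely, I would fix $i\in\{0,1,\dots,r\}$, take an arbitrary $x\in[0;1]$ with its canonical $(r+1)$-ary expansion $x=\Delta^{r+1}_{\alpha_1\alpha_2\dots}$ (the one with period $(0)$ at $(r+1)$-binary points, as stipulated in \eqref{df}), and compute with the geometric series
\[
\frac{i+x}{r+1}=\frac{i}{r+1}+\frac{1}{r+1}\sum_{n=1}^{\infty}\frac{\alpha_n}{(r+1)^n}
=\sum_{n=1}^{\infty}\frac{\beta_n}{(r+1)^n}=\Delta^{r+1}_{i\alpha_1\alpha_2\dots},
\]
where $\beta_1=i$ and $\beta_{n+1}=\alpha_n$; that is, the image is obtained by writing $i$ in front of the digit string of $x$.

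Granting for the moment that $\Delta^{r+1}_{i\alpha_1\alpha_2\dots}$ is again the canonical expansion of $\frac{i+x}{r+1}$, the definition \eqref{df} gives $f\bigl(\frac{i+x}{r+1}\bigr)=\Delta^{r_s}_{i\alpha_1\alpha_2\dots}$, and the matching identity on the value side,
\[
\Delta^{r_s}_{i\alpha_1\alpha_2\dots}=\frac{i}{s}+\frac{1}{s}\sum_{n=1}^{\infty}\frac{\alpha_n}{s^n}
=\frac{i}{s}+\frac{1}{s}\,\Delta^{r_s}_{\alpha_1\alpha_2\dots}=\frac{i}{s}+\frac{1}{s}f(x),
\]
is a pure manipulation of the defining series and needs no representation-theoretic input (it holds for the value of the series regardless of any substitution ambiguity). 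Chaining the two displays yields the asserted equation.

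The delicate point, and the one I expect to be the main obstacle, is the \emph{well-definedness} step glossed over above: since $f$ is defined only through the canonical period-$(0)$ expansion and, by the earlier theorem, is genuinely discontinuous at $(r+1)$-binary points, I must check that prepending $i$ carries canonical expansions to canonical expansions. This reduces to verifying that $\frac{i+x}{r+1}$ is $(r+1)$-binary if and only if $x$ is, and that if $\alpha_1\alpha_2\dots$ ends in period $(0)$ then so does $i\alpha_1\alpha_2\dots$; the former follows at once from the observation that $\frac{i+x}{r+1}=\frac{k}{(r+1)^m}$ forces $x$ to be $(r+1)$-adic rational and conversely, and the latter is trivial since prepending does not alter the tail. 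The genuinely exceptional case is the right endpoint $x=1=\Delta^{r+1}_{(r)}$, whose canonical expansion is not of period-$(0)$ type; here the special value $f(\Delta^{r+1}_{(r)})=1$ in \eqref{df} is exactly what is required. Indeed, for $i<r$ one has $\frac{i+1}{r+1}=\Delta^{r+1}_{[i+1](0)}$, whence $f\bigl(\frac{i+1}{r+1}\bigr)=\frac{i+1}{s}=\frac{i}{s}+\frac{1}{s}\cdot 1=\frac{i}{s}+\frac{1}{s}f(1)$, confirming the equation at the endpoint. I would close by remarking that the convention $f(1)=1$ is precisely the compatibility condition forced at the shared cylinder endpoints $\frac{i+1}{r+1}=\phi_i(1)=\phi_{i+1}(0)$, which is the structural reason it must be imposed in the definition.
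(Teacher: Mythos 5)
Your proof takes exactly the same route as the paper's: the affine map $x\mapsto\frac{i+x}{r+1}$ prepends the digit $i$ to the $(r+1)$-ary expansion, $f$ then prepends $i$ on the $r_s$-side, and the identity $\Delta^{r_s}_{i\alpha_1\alpha_2\ldots}=\frac{i}{s}+\frac{1}{s}\Delta^{r_s}_{\alpha_1\alpha_2\ldots}$ closes the computation; the paper's proof is precisely this and nothing more. Your additional verification that prepending preserves canonical (period-$(0)$) expansions, and your check at the endpoint, are welcome refinements the paper omits --- though note that the one case you leave aside, $x=1$ with $i=r$, is the one where the equation actually fails under the convention $f(1)=1$ (it would force $1=\frac{r+1}{s}$, i.e. $s=r+1$), so the residual defect lies in the lemma's statement rather than in your argument.
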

\begin{proof}
 Let $x=\Delta^{r+1}_{\alpha_1\alpha_2...\alpha_n...}$. Then $f(x)=\Delta^{r_s}_{\alpha_1\alpha_2...\alpha_n...}.$
 We have \[\frac{i+x}{r+1}=\frac{i}{r+1}+\frac{1}{r+1}x=
  \Delta^{r+1}_{i\alpha_1\alpha_2...\alpha_n...}.\]
 and therefore
  \[f(\frac{i+x}{r+1})=
  \Delta^{r_s}_{i\alpha_1\alpha_2...\alpha_n...}.\]
On the other hand,
  \[\frac{i}{s}+\frac{i}{s}f(x)=\frac{i}{s}+\frac{1}{s}\Delta^{r_s}_{\alpha_1\alpha_2...} =\Delta^{r_s}_{i\alpha_1\alpha_2...\alpha_n...}=
  f(\frac{i+x}{r+1}),\]
  which completes the proof.
  \end{proof}

The operator of the right shift of digits in the $g$-representation of numbers is defined by the mapping
$\delta_i$
\begin{equation}\label{num3}
  \delta_i(x=\Delta^g_{\alpha_1\alpha_2...\alpha_n...})=
  \Delta^g_{i\alpha_1\alpha_2...\alpha_n...}.
\end{equation}
This definition applies to both $(r+1)$-representations and to $r_s$-representations; in the first case
the operator will be denoted by $\rho_i$.

The mapping $\delta_i$ is well defined for $r_s$-representations. Indeed, for two different $r_s$-representations $\Delta^{r_s}_{\alpha_1\alpha_2\ldots}$ and $\Delta^{r_s}_{\beta_1\beta_2\ldots}$ of the same number, we obtain
\begin{align*}
  \delta_i(\Delta^{r_s}_{\alpha_1\alpha_2...\alpha_n...})=
  &\Delta^{r_s}_{i\alpha_1\alpha_2...\alpha_n...}=
  \frac{i}{s}+\frac{1}{s}x=\frac{i}{s}+
  \frac{1}{s}\Delta^{r_s}_{\alpha_1\alpha_2...\alpha_n...}=\\
  =&\frac{i}{s}+
  \frac{1}{s}\Delta^{r_s}_{\beta_1\beta_2...\beta_n...}=
  \delta_i(\Delta^{r_s}_{\beta_1\beta_2...\beta_n...}).
\end{align*}
Hence, $\delta_i(x=\Delta^{r_s}_{\alpha_1\alpha_2...\alpha_n...})=
\frac{1}{s}x+\frac{i}{s}$ is an increasing affine map.
\begin{theorem}
  The graph $\Gamma_f$ of $f$ is a self-affine set
\[\Gamma_f=\bigcup\limits_{i=0}^{r}\varphi_i(\Gamma_f),\]
where $\varphi_i$ is defined by $\varphi_i: \begin{cases}
                   x'=\rho_i(x)=\Delta^{r+1}_{i\alpha_1(x)...\alpha_n(x)...}=\frac{i}{r+1}+\frac{1}{r+1}x,\\
                    y'=\delta_i(y)=\Delta^{r_s}_{i\alpha_1(x)...\alpha_n(x)...}=\frac{i}{s}+\frac{1}{s}f(x),
                 \end{cases}$ $i\in A_r$.
\end{theorem}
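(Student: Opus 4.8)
The plan is to derive the self-affine identity directly from the functional equation established in the previous lemma,
\[f\Bigl(\frac{i+x}{r+1}\Bigr)=\frac{i}{s}+\frac{1}{s}f(x),\qquad i=\overline{0,r},\]
which already says that each $\varphi_i$ sends a graph point to a graph point. First I would rewrite $\varphi_i$ as an honest self-map of the plane, $\varphi_i(x,y)=\bigl(\frac{i+x}{r+1},\,\frac{i}{s}+\frac{1}{s}y\bigr)$; this is an orientation-preserving affine contraction with the two \emph{distinct} ratios $\frac{1}{r+1}$ and $\frac{1}{s}$ (distinct because $r+1>s$), which is exactly why the graph is self-\emph{affine} and not self-similar. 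Since $\{\varphi_i\}_{i=0}^r$ is a finite system of contractions, the strategy is to verify the set identity $\Gamma_f=\bigcup_{i=0}^r\varphi_i(\Gamma_f)$ by two inclusions.

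For $\bigcup_i\varphi_i(\Gamma_f)\subseteq\Gamma_f$ I would take $(x,f(x))$ with $x\in[0,1)$ and compute, via the functional equation,
\[\varphi_i\bigl(x,f(x)\bigr)=\Bigl(\tfrac{i+x}{r+1},\,\tfrac{i}{s}+\tfrac{1}{s}f(x)\Bigr)=\Bigl(\tfrac{i+x}{r+1},\,f\bigl(\tfrac{i+x}{r+1}\bigr)\Bigr)\in\Gamma_f,\]
the first coordinate lying in $[\frac{i}{r+1},\frac{i+1}{r+1}]\subseteq[0,1]$. For the reverse inclusion I would use the partition $[0,1]=\bigcup_{i=0}^{r}[\frac{i}{r+1},\frac{i+1}{r+1}]$: given $(x,f(x))$, read the leading digit $i=\alpha_1(x)$ off the canonical period-$(0)$ representation of $x$, set $t=(r+1)x-i\in[0,1]$, and conclude $(x,f(x))=\varphi_i(t,f(t))\in\varphi_i(\Gamma_f)$ from the same equation.

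The main obstacle is the boundary, where $f$ is discontinuous and the bare set of graph points is not closed. The delicate case is the right endpoint $x=1=\Delta^{r+1}_{(r)}$, whose only representation is the period-$(r)$ one: the functional equation holds there for $i=r$ (returning $(1,f(1))$, provided one reads the endpoint value as $f(1)=\Delta^{r_s}_{(r)}=\frac{r}{s-1}$), but for $i<r$ the image $\varphi_i(1,f(1))=\bigl(\frac{i+1}{r+1},\,\frac{i}{s}+\frac{r}{s(s-1)}\bigr)$ lands on the \emph{left limit} $f\bigl((\frac{i+1}{r+1})^-\bigr)$ rather than on $f(\frac{i+1}{r+1})=\frac{i+1}{s}$, the two differing by the rank-$1$ jump $\delta_1=\frac{r-s+1}{s(s-1)}$ computed earlier. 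Thus the identity cannot hold for the bare graph, and I would read $\Gamma_f$ as the closed (completed) graph, adjoining at each $(r+1)$-binary point the vertical jump segment. The computation $\frac{i+1}{s}+\delta_1=\frac{i}{s}+\frac{r}{s(s-1)}$ shows that $\varphi_i$ delivers exactly the top of the segment over $\frac{i+1}{r+1}$ while $\varphi_{i+1}$ delivers its bottom $\frac{i+1}{s}$, so adjacent affine copies clip together along these segments; checking that these matching conditions fill in the whole completed graph, making it the genuine compact attractor of $\{\varphi_i\}$, is the crux of the argument.
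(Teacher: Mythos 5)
Your two inclusions follow essentially the same route as the paper, which writes $\Gamma_f=\Gamma_0\cup\dots\cup\Gamma_r$ with $\Gamma_i$ the graph over the cylinder $\Delta^{r+1}_i$ and proves $\varphi_i(\Gamma_f)=\Gamma_i$ from the relation $\delta_i(f(x))=f(\rho_i(x))$. Your additional observation about the boundary is correct and identifies a point the paper's proof silently passes over: with $f(1)=\Delta^{r_s}_{(r)}=\frac{r}{s-1}$ (the right reading; the paper's literal ``$f(\Delta^{r+1}_{(r)})=1$'' would instead break invariance at the fixed point of $\varphi_r$), the image $\varphi_i(1,f(1))$ for $i<r$ is the left-limit point over the binary point $\frac{i+1}{r+1}$ and lies off the graph, so the bare graph is not literally invariant.

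The gap is in your repair. The invariant compact set is the closure $\overline{\Gamma_f}$, which adjoins at each $(r+1)$-binary point $x_0$ only the single point $(x_0,f(x_0^-))$, \emph{not} the vertical segment $\{x_0\}\times[f(x_0),f(x_0^-)]$: since $f$ is right-continuous at $x_0$ with a jump from the left, graph points accumulate only at the two endpoints of that segment, never in its interior. Moreover the segment-completed set $K'$ you propose is genuinely not invariant, so the ``crux'' check you defer would fail: each $\varphi_j$ carries the fibre of $K'$ over a rank-$m$ binary point onto the fibre over a rank-$(m+1)$ binary point, so the interior of a rank-$1$ segment over $x_0=\frac{i+1}{r+1}$ could only arise from the fibres of $K'$ over the preimages $x=1$ (under $\varphi_i$) and $x=0$ (under $\varphi_{i+1}$); those fibres are the singletons $(1,f(1))$ and $(0,0)$, whose images are exactly the top and the bottom of the segment. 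Adjacent copies therefore do not ``clip together along'' the segment --- they meet it only in its two endpoints. The clean fix is to describe the attractor parametrically as $K=\{(\Delta^{r+1}_{\alpha_1\alpha_2\ldots},\,\Delta^{r_s}_{\alpha_1\alpha_2\ldots}):(\alpha_n)\in L_r\}$, for which $K=\bigcup_{i=0}^{r}\varphi_i(K)$ is immediate because every digit sequence has a first digit, and then to observe that $K=\overline{\Gamma_f}$, since the two $(r+1)$-ary parametrizations of a binary point yield precisely the values $f(x_0)$ and $f(x_0^-)$, while $\overline{\Gamma_f}\setminus\Gamma_f$ is only countable and thus harmless for the dimension corollary that follows.
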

\begin{proof} Clearly $\Gamma_f=\Gamma_0\cup\Gamma_1\cup...\cup\Gamma_r$, where $\Gamma_i=\{M(x;y):~
x\in \Delta^{r+1}_i, y=f(x)\}$, $i=\overline{0,r}$.
We show that $\varphi_i(\Gamma_f)=\Gamma_i$.

Let $M(x;y)\in\Gamma_f$, where $x=\Delta^{r+1}_{\alpha_1...\alpha_n...}$ and $y=f(x)=\Delta^{r_s}_{\alpha_1...\alpha_n...}$. Consider the image point $M'(x';y')=\varphi(M(x;y))$. Then
$x'=\rho_i(x)=\Delta^{r+1}_{i\alpha_1\alpha_2\ldots}
\in\Delta^{r+1}_{i}$ and $y'=\delta_i(y)=f(x')$. Hence, $M'(x';y')\in \Gamma_i$, and therefore  $\varphi_i(\Gamma_f)\subset \Gamma_f$.

Now we show the converse inclusion.
Let  $M'(x';y')\in \Gamma_i$, that is, $x'=\Delta^{r+1}_{i\alpha_1\alpha_2...\alpha_n...}$ and
$y'=f(x')$. Then there exists $x=\Delta^{r+1}_{\alpha_1\alpha_2\ldots}$ such that $M(x,f(x))\in \Gamma_f$ and
$
M'(x',y')=\varphi_i(M(x,f(x))).
$
Consequently, $\Gamma_i\subset \varphi_i(\Gamma_f)$.

Combining both inclusions, we obtain
$\Gamma_i=\varphi_i(\Gamma_f)$.
\end{proof}
\begin{corollary}
  The self-affine dimension of the graph of $f$ is equal to $\frac{2\ln(r+1)}{\ln(r+1)s}$.
\end{corollary}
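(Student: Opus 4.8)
The plan is to derive the asserted value directly from the characteristic (Moran-type) equation for the self-affine dimension, using the explicit generators $\varphi_i$ supplied by the preceding theorem. The essential observation is that every map $\varphi_i$, $i=\overline{0,r}$, has the same linear part: it contracts the horizontal direction by the factor $\frac{1}{r+1}$ (coming from $x'=\frac{i}{r+1}+\frac{1}{r+1}x$) and the vertical direction by the factor $\frac{1}{s}$ (coming from $y'=\frac{i}{s}+\frac{1}{s}y$). Thus all $r+1$ generators share the pair of axis contraction ratios $\bigl(q_1,q_2\bigr)=\bigl(\frac{1}{r+1},\frac{1}{s}\bigr)$, and the system is governed by a single pair of data.

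First I would recall the definition of self-affine dimension in this framework: to each affine contraction one attaches the coefficient equal to the geometric mean $\sqrt{q_1q_2}$ of its axis contraction ratios, and the dimension $\alpha_0$ is the unique root of $\sum_{i=0}^{r}\bigl(\sqrt{q_1q_2}\bigr)^{\alpha_0}=1$. Since $\{\varphi_i\}_{i=0}^{r}$ consists of $r+1$ maps with the identical coefficient $\sqrt{\tfrac{1}{(r+1)s}}$, this collapses to the single equation
\[(r+1)\left(\frac{1}{(r+1)s}\right)^{\alpha_0/2}=1.\]
Then I would solve for $\alpha_0$ by rewriting this as $\bigl((r+1)s\bigr)^{\alpha_0/2}=r+1$ and taking logarithms, which yields $\frac{\alpha_0}{2}\ln\bigl((r+1)s\bigr)=\ln(r+1)$, whence $\alpha_0=\frac{2\ln(r+1)}{\ln\bigl((r+1)s\bigr)}$, exactly the asserted value.

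The computation is routine once the framework is fixed; the point that genuinely needs care — and where I expect the only real (conceptual rather than technical) difficulty to lie — is the status of this formal root in view of the fact that the generating cylinders truly overlap, their intersections having positive length as computed earlier in the paper. I would therefore stress that ``self-affine dimension'' here denotes the formal quantity determined by the characteristic equation with geometric-mean coefficients, attached to the contractive system $\{\varphi_i\}$, rather than an a priori value of the Hausdorff dimension of $\Gamma_f$. Because the open set condition fails, the honest metric dimension would require separate treatment, but the self-affine dimension is well defined and is precisely $\frac{2\ln(r+1)}{\ln\bigl((r+1)s\bigr)}$.
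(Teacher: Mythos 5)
Your proposal is correct and follows essentially the same route as the paper: the paper likewise defines the self-affine dimension as the root of $(r+1)\bigl|\det\operatorname{diag}(\tfrac{1}{r+1},\tfrac{1}{s})\bigr|^{x/2}=1$, which is exactly your equation $(r+1)\bigl(\tfrac{1}{(r+1)s}\bigr)^{x/2}=1$, and solves it to get $\frac{2\ln(r+1)}{\ln((r+1)s)}$. Your added caveat that this is a formal quantity attached to the overlapping system $\{\varphi_i\}$, not an established value of the Hausdorff dimension of $\Gamma_f$, is a sound observation that the paper leaves implicit.
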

Indeed, the self-affine dimension is the solution of the equation
\[(r+1)\left|
         \begin{array}{cc}
           \frac{1}{r+1} & 0 \\
           0 & \frac{1}{s} \\
         \end{array}
       \right|^{\frac{x}{2}}=1.
\]
\begin{corollary}
  The following equality holds:
  \[\int\limits_{0}^{1}f(x)dx=\frac{r}{2(s-1)}.\]
\end{corollary}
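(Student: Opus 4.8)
The plan is to convert the self-affinity of $f$, encoded in the functional equations $f\bigl(\frac{i+x}{r+1}\bigr)=\frac{i}{s}+\frac{1}{s}f(x)$ from the preceding lemma, into a single linear equation for the quantity $I=\int_0^1 f(x)\,dx$. Before doing so I would first check that the integral is well defined. The function $f$ takes values in $[0,\frac{r}{s-1}]$, so it is bounded, and by the theorem characterizing its continuity points its set of discontinuities is exactly the set of $(r+1)$-binary points, which is countable and hence of Lebesgue measure zero. By Lebesgue's criterion for Riemann integrability (bounded function with a null set of discontinuities), $f$ is Riemann integrable on $[0,1]$, so $I$ exists.

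Next I would decompose the domain into the $r+1$ first-rank cylinders $\Delta^{r+1}_i=[\frac{i}{r+1},\frac{i+1}{r+1}]$, $i=\overline{0,r}$, which cover $[0,1]$ and overlap only in the finite (measure-zero) set of shared endpoints, so the integral splits additively. On each piece I substitute $x=\frac{i+t}{r+1}$ with $t\in[0,1]$, so that $dx=\frac{1}{r+1}\,dt$, and apply the lemma to obtain $f(x)=\frac{i}{s}+\frac{1}{s}f(t)$. This gives
\[
\int_{\Delta^{r+1}_i} f(x)\,dx=\frac{1}{r+1}\int_0^1\Bigl(\frac{i}{s}+\frac{1}{s}f(t)\Bigr)\,dt=\frac{1}{r+1}\Bigl(\frac{i}{s}+\frac{1}{s}I\Bigr).
\]
Summing over $i$ and using $\sum_{i=0}^{r} i=\frac{r(r+1)}{2}$ collapses every piece into the single self-referential relation $I=\frac{r}{2s}+\frac{1}{s}I$; solving this two-term linear equation yields $I=\frac{r}{2(s-1)}$, as claimed.

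The only genuinely delicate point is the integrability justification. Since the earlier theorem shows that $f$ has unbounded variation, one cannot invoke the standard Riemann integrability of functions of bounded variation, and must instead argue through boundedness together with the countability (hence null measure) of the discontinuity set. Everything after that is a routine change of variables inside each cylinder and the solution of a linear equation, so no further obstacle is expected.
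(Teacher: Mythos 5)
Your proposal is correct and follows essentially the same route as the paper: decompose $[0,1]$ into the first-rank cylinders $\Delta^{r+1}_i$, apply the functional equations $f\bigl(\frac{i+x}{r+1}\bigr)=\frac{i}{s}+\frac{1}{s}f(x)$ under the change of variables, sum over $i$, and solve the resulting linear equation $I=\frac{r}{2s}+\frac{1}{s}I$. Your additional justification of integrability via Lebesgue's criterion (boundedness plus the countable, hence null, set of discontinuities at the $(r+1)$-binary points) is a welcome piece of rigor that the paper omits, but it does not change the argument.
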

Indeed, using the self-affinity of the graph of $f$, we obtain
\begin{align*}
  \int\limits_{0}^{1}f(x)dx=&\sum\limits_{i=0}^{r}\int\limits_{\Delta^{r+1}_{i}}f(x)dx=
  \sum\limits_{i=0}^{r}\int\limits_{0}^{1}(\frac{i}{s}+\frac{i}{s}f(x))d(\frac{i}{r+1}+\frac{x}{r+1})=\\
  =&\sum\limits_{i=0}^{r}(\frac{i}{s(r+1)}+\frac{1}{s(r+1)}\int\limits_{0}^{1}f(x)dx)=\frac{r}{2s}+
  \frac{1}{s}\int\limits_{0}^{1}f(x)dx.
\end{align*}
Hence
\[\int\limits_{0}^{1}f(x)dx=\frac{r}{2s}\cdot\frac{s}{s-1}=\frac{r}{2(s-1)}.\]
\section{Sets of function levels and their fractal properties}
Recall that the level set of the function $f$ at the value $y_0$ is defined by:
$$f^{-1}(y_0)=\{x\in [0,1]: f(x)=y_0\}.$$
\begin{theorem}
  If $r<2s-1$, then the function $f$  has level sets of different cardinalities:
  \begin{itemize}
\item[1)] singleton and finite;
\item[2)] countable and continuum.
\end{itemize}
Moreover, there exist level sets of positive Hausdorff--Besicovitch dimension.  
The set of all values whose level sets are finite or countable has
Hausdorff--Besicovitch dimension equal to
  $\frac{\ln(2s-r-1)}{\ln s}.$
\end{theorem}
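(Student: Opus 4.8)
The plan is to reduce the study of $f^{-1}(y_0)$ to a counting problem for $r_s$-representations. Since $f(x)=\Delta^{r_s}_{\alpha(x)}$, where $\alpha(x)$ is the canonical (period-$(0)$) $(r+1)$-ary digit string of $x$, and distinct canonical strings name distinct points of $[0,1]$, the assignment $x\mapsto\alpha(x)$ identifies $f^{-1}(y_0)$ with the set of those $r_s$-representations of $y_0$ that do not terminate in the period $(r)$; thus $|f^{-1}(y_0)|$ equals the number of such representations. I would organize them by the \emph{representation tree} $T(y_0)$ whose nodes are the admissible prefixes $(\beta_1,\dots,\beta_n)$. Writing the orbit $z_n=s^{n}\bigl(y_0-\sum_{i\le n}\beta_i s^{-i}\bigr)\in[0,\frac{r}{s-1}]$ with $z_{n+1}=sz_n-\beta_{n+1}$, a node has two children exactly when $z_n$ lies in an overlap of two adjacent rank-$1$ cylinders (otherwise the admissible digit is forced), and by the earlier overlap computation the two children then fall into $[1,\frac{r}{s-1}]$ (smaller digit) and $[0,\frac{r-s+1}{s-1}]$ (larger digit). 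The representations are exactly the infinite branches of this finitely branching tree, so $|f^{-1}(y_0)|\in\{\text{finite}\}\cup\{\aleph_0,\,2^{\aleph_0}\}$.

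For the existence of all four cardinalities I argue as follows. A singleton occurs at any $y_0$ with a unique $r_s$-representation, e.g. $y_0=0$ or the point $\Delta^{r_s}_{(r-s+1,\,r-s+2)}$ recorded earlier. A prescribed finite value is obtained by planting finitely many \emph{isolated} interchangeable pairs $(j,s+i)\leftrightarrow(j+1,i)$ (admissible since $r\ge s$) into an otherwise forced string terminating in $(0)$, so that each pair contributes an independent factor $2$ and no other branching occurs. For the continuum, and simultaneously for the positive-dimension claim, the cleanest device is a pair of equal-length \emph{interchangeable blocks}: the blocks $B_0=(j,s+i)$ and $B_1=(j+1,i)$ have the common $r_s$-value $\frac{js+s+i}{s^{2}}$, so for every $\varepsilon\in\{0,1\}^{\N}$ the string $B_{\varepsilon_1}B_{\varepsilon_2}\cdots$ represents one and the same number $y_0=\Delta^{r_s}_{(B_0)}$. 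Since $B_0,B_1$ begin with the distinct digits $j$ and $j+1$, the corresponding $(r+1)$-ary addresses form a self-similar Cantor set (two maps of ratio $(r+1)^{-2}$ with disjoint images), whence $f^{-1}(y_0)$ has cardinality $2^{\aleph_0}$ and $\dim_H f^{-1}(y_0)\ge\frac{\ln 2}{2\ln(r+1)}>0$. A countable level set is produced by a spine construction: choose $y_0$ admitting one block that returns the orbit to its starting value and one block feeding into a unique-representation tail, so that the branches are indexed by the step at which one "peels off", giving exactly $\aleph_0$ of them.

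The heart of the theorem is the dimension formula. The lower bound comes from the set $U$ of points whose $r_s$-representation is \emph{unique} (each such point being its own singleton level set), which I realize as a graph-directed self-similar set. Restricting the forced map to the ``safe'' parts $P_0,\dots,P_r$ of the cylinders (the portions lying in no overlap), one checks that every \emph{middle} safe region $P_i$, $1\le i\le r-1$, is carried by $z\mapsto sz-i$ onto the common interval $J=[\frac{r-s+1}{s-1},1]$, and that $J$ contains \emph{exactly} the safe regions $P_j$ with $r-s+1\le j\le s-1$, each fully. These $2s-r-1$ regions thus form a strongly connected component whose transition matrix is the all-ones matrix of size $2s-r-1$, while the remaining states ($P_0$, $P_r$, and the non-core middle regions) are transient or carry only self-loops of Perron value $1$. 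With contraction ratio $1/s$ at each step and the evident separation (the safe regions are disjoint intervals, so the open set condition holds), the graph-directed dimension equals $\frac{\ln(2s-r-1)}{\ln s}$, giving $\dim_H U=\frac{\ln(2s-r-1)}{\ln s}$ and hence the lower bound for $F\supseteq U$, where $F$ is the set of values with finite or countable level set.

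For the matching upper bound I show $F\subseteq V:=\bigcup_{m\ge0}\bigcup_{(c_1,\dots,c_m)}\sigma_{c_1\cdots c_m}(U)$, where $\sigma_{c_1\cdots c_m}$ is the affine prefixing map of ratio $s^{-m}$. If $y_0\in F$ then the branch space $[T(y_0)]$ is a nonempty countable closed subset of $A_r^{\N}$; being countable it has empty perfect kernel, hence an isolated branch $b$, and isolation means some prefix of $b$ is extended by no other branch, i.e. $b$ is eventually forced, so its tail value lies in $U$ and $y_0\in\sigma_{c_1\cdots c_m}(U)$ for that prefix. As $V$ is a countable union of similar copies of $U$, countable stability and bi-Lipschitz invariance of Hausdorff dimension yield $\dim_H F\le\dim_H V=\dim_H U=\frac{\ln(2s-r-1)}{\ln s}$. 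The main obstacle, demanding the most care, is precisely this upper bound: one must justify that having only countably many representations forces an eventually forced (branch-free) tail—the Cantor--Bendixson argument above—and separately verify the graph-directed computation, above all the identification of the recurrent core as the complete graph on the $2s-r-1$ central safe regions.
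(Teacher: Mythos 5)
Your proposal is essentially correct but follows a genuinely different and more self-contained route than the paper. The paper reduces everything to counting $r_s$-representations and then leans on the external reference \cite{PratsVynnO} for the key inputs: uniqueness/continuum cardinality of the representations of $\Delta^{r_s}_{(c)}$, and the value $\frac{\ln(2s-r-1)}{\ln s}$ for the dimension of the set $U$ of uniquely representable numbers; it proves the countable case by an explicit digit-by-digit analysis of $y_0=\Delta^{r_s}_{(s-1)}$ (and $\Delta^{r_s}_{(r-s+1)}$ by the symmetry $x\mapsto\frac{r}{s-1}-x$), and gets the dimension of the finite/countable-fibre value set by countable stability, simply asserting that the set of values with countably many representations is countable. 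You instead derive the two hard ingredients yourself: the graph-directed computation of $\dim_H U$ (your identification of the recurrent core as the complete graph on the $2s-r-1$ safe regions $P_{r-s+1},\dots,P_{s-1}$, each mapped onto $J=[\frac{r-s+1}{s-1},1]$, checks out), and the upper bound via the Cantor--Bendixson isolated-branch argument giving $F\subseteq\bigcup_w\sigma_w(U)$. The latter is actually cleaner and more rigorous than the paper's unproved countability assertion. Your positive-dimension bound $\frac{\ln 2}{2\ln(r+1)}$ is computed in the domain (base $r+1$), which is the right place since level sets live in $[0,1]$; the paper's own computation at this point is internally inconsistent (it states $\frac12\log_s(r+1)$ but solves $2s^{-2x}=1$), so your version is preferable.

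Two soft spots to tighten. First, your countable case is only a ``spine construction'' sketch; the paper's treatment here is the more careful one, exhibiting $y_0=\Delta^{r_s}_{(s-1)}$ concretely and verifying digit by digit that the only admissible first digits are $s-1$ and $s$ (plus $s-2$ when $r=2s-2$), so that the representation tree is exactly a single spine with one peel-off per level. You should either reproduce such a verification or at least name a concrete $y_0$. Second, in your finite-cardinality construction the claim that an inserted interchangeable pair creates ``no other branching'' needs checking: the substituted block $(j+1,i)$ sits between forced digits and could in principle form a new interchangeable pair with its neighbours, so the surrounding forced string must be chosen so that the orbit re-enters the safe regions immediately after the planted pair. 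Neither gap is structural; both are matters of supplying the verification.
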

\begin{proof}
  It is clear that the cardinality of the level set $f^{-1}(y_0)$, where $y_0=f(x_0)$ coincides with the cardinality of the set of distinct $r_s$-representations of the number $y_0\in [0,\frac{r}{s-1}]$.

   As noted above, the numbers $0$ and $\frac{r}{s-1}$ have unique $r_s$-representations: $f^{-1}(0)=\Delta^{r_s
  }_{(0)}$ and $f^{-1}(\frac{r}{s-1})=\Delta^{r_s}_{(r)}=1$. Hence, the corresponding level sets of the function $f$ are singletons.

   1. It is proved in~\cite{PratsVynnO} that the number $x=\Delta^{r_s}_{(c)}$, where $c\in \{0,r-s+2,r-s+3,...,s-2,r\}$, has a unique $r_s$-representation, whereas the number $x=\Delta^{r_s}_{(c)}$, where $c\in \{1,2,...,r-s,s,s+1,...,r-1\}$,
    has a continuum of distinct $r_s$-representations. Consequently, the corresponding level sets have the same cardinalities.
   
    2. We prove that the number $x=\Delta^{r_s}_{(c)}$, where $c\in \{r-s+1,s-1\}$,
     has a countable set of distinct $r_s$-representations.

Let $c=s-1$. Then 
\begin{equation}\label{num1}
    1=x_0=\Delta^{r_s}_{(s-1)}=\Delta^{r_s}_{[s-1]s(0)}=\Delta^{r_s}_{[s-1][s-1]...[s-1]s(0)},
  \end{equation}
  and, under the condition $r=2s-2$ 
  \begin{equation}\label{num2}
   1=x_0=\Delta^{r_s}_{(s-1)}=\Delta^{r_s}_{[s-1][s-2](r)}=\Delta^{r_s}_{[s-1][s-1]...[s-1][s-2](r)}.
     \end{equation}
  Thus, the number $x_0$ has infinitely many distinct $r_s$-representations. We show that no other representations exist.

First consider the case $r<2s-2$.

  The first digit of an $r_s$-representation of $x_0$ cannot be equal to $s-2$, since
  \[\Delta^{r_s}_{[s-2](r)}=\frac{s^2-3s+2+r}{s(s-1)}<\frac{s^2-3s+2+2s-2}{s(s-1)}=1.\]
 It also cannot be equal to $s+1$, because
\[\Delta^{r_s}_{[s+1](0)}=\frac{s+1}{s}>1.\]
  Hence, the first digit $\alpha_1(x_0)$ is either $s-1$ or $s$. If $\alpha_1(x_0)=s$,
  then $x_0=\Delta^{r_s}_{s(0)}$. Assume $\alpha_1(x_0)=s-1$. Then $x_0=\frac{s-1}{s}+x_1$, where $x_1=\frac{1}{s}$ and $\alpha_1(x_1)=\alpha_2(x_0)$. If $\alpha_2(x_0)=s$, then
  $x_0=\Delta^{r_s}_{[s-1]s(0)}$. Let $\alpha_2(x_0)\neq s$. The equality $\alpha_2(x_0)=s-2$ is impossible, since $\frac{1}{s}\Delta^{r_s}_{[s-2](r)}<\frac{1}{s}=x_1$,
and $\alpha_2(x_0)=s+1$ is also impossible because
    \[\frac{1}{s}\Delta^{r_s}_{[s+1](0)}=\frac{s+1}{s^2}>\frac{1}{s}=x_1.\]
  Therefore, $\alpha_2(x_0)=s-1$.

  Proceeding analogously, we obtain that under the condition
$\alpha_1(x_0)=\alpha_2(x_0)=s-1$ the next digit $\alpha_3(x_0)$ equals either $s$ or $s-1$. If $\alpha_3(x_0)=s$, then
$x_0=\Delta^{r_s}_{[s-1][s-1]s(0)}$.
Otherwise, the argument continues inductively. Hence, all possible $r_s$-representations of $x_0$ are exhausted by~\eqref{num1}.

  Now consider the case $r=2s-2$. If $s-1\neq \alpha_1(x_0)\neq s$, then $ \alpha_1(x_0)= s-2$ and $x_0=\Delta^{r_s}_{[s-2](r)}$.
   If $ \alpha_1(x_0)= s-1$ and $s-1\neq \alpha_2(x_0)\neq s$, then $x_0=\Delta^{r_s}_{[s-1][s-2](r)}$. 
   If $ \alpha_1(x_0)= \alpha_2(x_0) =s-1$ and $s-1\neq \alpha_3(x_0)\neq s$, then $x_0=\Delta^{r_s}_{[s-1][s-1][s-2](r)}$.
    Continuing in this way, we obtain precisely the representations in~\eqref{num2}. Hence, the set of all $r_s$-representations of $\Delta^{r_s}_{(s-1)}$ is countable as a union of two countable sets.

     Since the numbers $x$ and $x'=\frac{r}{s-1}-x$ have the same cardinality of sets of $r_s$-representations, because
  \[x'=\frac{r}{s-1}-x=\sum\limits_{k=1}^{\infty}\frac{r}{s^k}-\sum\limits_{k=1}^{\infty}\frac{\alpha_k(x)}{s^k}=
  \sum\limits_{k=1}^{\infty}\frac{r-\alpha_k(x)}{s^k}=
  \Delta^{r_s}_{[r-\alpha_1][r-\alpha_2]...[r-\alpha_k]...},\]
  the numbers $\Delta^{r_s}_{(s-1)}$ and $\Delta^{r_s}_{(r-s+1)}$ also have equally powerful families of representations.

   If the pair $(a, b)$ 
   admits an alternative replacement, then the level set
$f^{-1}(\Delta^{r_s}_{(ab)})$ is nowhere dense, has Lebesgue measure zero, and its Hausdorff--Besicovitch dimension is not less than $\frac12\log_s(r+1)$.

    Indeed, if $(c,d)$ is alternative to  $(a,b)$, i.e. $as+b=cs+d$, then the set $C=\{x: x=\sum\limits_{k=1}^{\infty}\frac{\alpha_k}{(r+1)^2}, \alpha_k\in\{as+b,cs+d\}\} $ is contained in the level set of $\Delta^{r_s}_{(ab)}$, and the dimension of the Cantor-type self-similar set $C$ is determined by
$2\cdot s^{-2x}=1$, hence equals $x=\frac12\log_s 2$.
    Finally, it is proved in~\cite{PratsVynnO}
     that the set of numbers with a unique $r_s$-representation has Hausdorff--Besicovitch dimension
$\frac{\ln(2s-r-1)}{\ln s}$.
Each such number generates a countable family of numbers having finitely many representations. By countable stability of dimension, the set of numbers with finitely many representations has the same dimension. The set of numbers with countably many representations is itself countable, and therefore its dimension is zero. 
    \end{proof}
\begin{theorem}
If $r>2s-2$, then all level sets of the function $f$, except for the levels $0$ and $\frac{r}{s-1}$,  are continuums.
\end{theorem}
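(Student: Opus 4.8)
The plan is to reduce the claim to a statement about the multiplicity of $r_s$-representations and then to embed a binary tree of representations into each interior level set. As observed in the proof of the preceding theorem, the cardinality of $f^{-1}(y_0)$ (for $y_0 = f(x_0)$) equals the number of distinct $r_s$-representations of $y_0 \in [0,\frac{r}{s-1}]$. Since $0 = \Delta^{r_s}_{(0)}$ and $\frac{r}{s-1} = \Delta^{r_s}_{(r)}$ have unique representations, their level sets are singletons, and it remains to prove that every $y_0$ in the open interval $(0, M)$, where $M := \frac{r}{s-1}$, has exactly $2^{\aleph_0}$ representations. The upper bound is automatic, since there are at most $|L_r| = 2^{\aleph_0}$ sequences; only a lower bound of $2^{\aleph_0}$ is needed.

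The first step is the arithmetic reformulation of the hypothesis: $r > 2s-2$ is equivalent to $M = \frac{r}{s-1} > 2$. For $z \in [0,M]$ the admissible first digits are the integers $i \in \{0,\dots,r\}$ with $z \in \Delta^{r_s}_i = [\frac{i}{s}, \frac{i}{s}+\frac{M}{s}]$, i.e. $i \in [sz - M,\, sz] \cap [0,r]$, and each choice reduces $z$ to $T_i(z) = sz - i \in [0,M]$; iterating the maps $T_i$ generates precisely the $r_s$-representations of $z$. The admissibility window $[sz-M,\,sz]$ has length exactly $M > 2$, so in the central part of $[0,M]$ there are always at least two admissible digits. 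This is the arithmetic form of the triple-overlap phenomenon recorded above (three consecutive cylinders meet exactly when $r > 2s-2$).

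To build the tree I separate the two forced zones $U_0 = (0, \frac{1}{s})$, on which $0$ is the only admissible digit, and $U_r = M - U_0 = (M - \frac{1}{s}, M)$, on which $r$ is the only admissible digit, from the complementary branching zone $B$, where at least two digits are admissible. On $U_0$ the forced map is $T_0(z) = sz$ and on $U_r$ it is $T_r(z)=sz-r$; both expand by the factor $s \geq 2$, and their images stay in $(0,1)$, respectively $(M-1, M)$, which are disjoint from the opposite forced zone. Hence the forced orbit of any interior point leaves $U_0 \cup U_r$ after finitely many steps and lands in $B$ (it cannot remain forever, since that would force $y_0 \in \{0, M\}$). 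Call a digit \emph{non-degenerate} at $z$ if $T_i(z) \in (0,M)$; for all $z$ outside a fixed countable set (those with $sz \in \mathbb{Z}$ or $sz - M \in \mathbb{Z}$) every admissible digit is non-degenerate, so such $z \in B$ carry at least two non-degenerate digits. Branching on two of them and recursing produces a binary tree whose infinite paths are pairwise distinct digit sequences, each representing $y_0$, giving $2^{\aleph_0}$ representations.

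The main obstacle is the \emph{persistence} of genuine branching: I must ensure that along every path the branching zone $B$ is revisited infinitely often and that at least two non-degenerate digits remain available each time, so that the embedded tree branches at infinitely many levels rather than becoming eventually linear. The delicate points are the boundary of $B$ and the countable set of degenerate points, where only one non-degenerate digit may survive. I would control this with the expansion factor $s$: it bounds the number of consecutive forced steps and forbids an orbit from being eventually trapped near $\{0,M\}$ unless $y_0$ is itself an endpoint; and I would use the symmetry $z \leftrightarrow M-z$, $\alpha_n \leftrightarrow r-\alpha_n$ (which preserves the number of representations) to treat the neighbourhoods of $0$ and $M$ uniformly. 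At each branch node I choose the two continuing digits so that their reduced values avoid the countable exceptional set (only finitely many digits are fixed at any stage, so this is always possible), which keeps two non-degenerate options alive and makes branching recur. Once recurrence is established, the binary tree has continuum-many infinite paths, so every interior level set has cardinality $2^{\aleph_0}$, completing the proof.
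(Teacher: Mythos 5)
Your reduction to counting $r_s$-representations, the observation that $r>2s-2$ is equivalent to $M=\frac{r}{s-1}>2$, and the dynamical setup via the maps $T_i(z)=sz-i$ are all correct, and the strategy (embedding a binary tree of representations) is genuinely different from the paper's, which argues combinatorially by locating infinitely many disjoint interchangeable digit pairs $(a,b)\leftrightarrow(a\pm1,b\mp s)$ inside a given representation. However, there is a genuine gap at exactly the point you flag as ``the main obstacle,'' and the mechanism you propose to close it does not work. You want to ``choose the two continuing digits so that their reduced values avoid the countable exceptional set'' $E=\{w:\; sw\in\Z \text{ or } sw-M\in\Z\}$. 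That choice is illusory: since $sT_i(z)=s^2z-si$ and $si\in\Z$, whether $sT_i(z)\in\Z$ holds is independent of the digit $i$ (and similarly for the condition $sT_i(z)-M\in\Z$). So either every admissible digit sends $z$ into $E$ or none does, and nothing can be gained by choosing among them. In particular, for every $s$-adic rational level $y_0$ (e.g.\ $y_0=\frac1s$ or $y_0=1$, interior points which are precisely the $r_s$-rational levels the paper treats separately in its cases 2 and 3), every state of every orbit lies in $E$ from some depth on. For these levels your argument establishes nothing, and the recurrence of branching --- the heart of the proof --- is never actually proved for any level.

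The gap is repairable, but by a different argument that needs no genericity: one should verify directly that every $z\in[\frac1s,\,M-\frac1s]$ other than the two points $\frac1s$ and $M-\frac1s$ admits at least \emph{two} digits $i$ with $T_i(z)\in(0,M)$. Indeed, at most two admissible digits can be degenerate (namely $i=sz$ and $i=sz-M$ when these are integers), and a short case analysis on how the window $[sz-M,\,sz]\cap[0,r]$ --- of length $M>2$ when untruncated --- is cut by $0$ and $r$ shows that at least two non-degenerate digits always survive, except at $z=\frac1s$ and $z=M-\frac1s$, where the unique non-degenerate digit sends $z$ to $1$, respectively to $M-1$, i.e.\ straight back into the genuine branching region. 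With that lemma in place, your forced-orbit argument (expansion by $s$ out of $(0,\frac1s)$ and $(M-\frac1s,M)$) does yield recurrence of branching along every path and the binary tree has continuum many branches; without it, the proof as written fails on a dense set of levels, including all the eventually-periodic ones.
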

\begin{proof} 
As noted above, the cardinality of the set of distinct $r_s$-representations of a number coincides with the cardinality of the corresponding level set of the function.

1.  Let $0<a<r$. Then the pair $(a,a)$ admits an alternative replacement.  
Indeed, if $a\le r-s$, then $$\frac{a}{s}+\frac{a}{s^2}=\frac{r-s}{s}+\frac{a+s}{s^2}.$$ If $a>r-s$, then
  $$\frac{a}{s}+\frac{a}{s^2}=\frac{a+1} {s}+\frac{a-s}{s^2}.$$  
  Hence, if some $r_s$-representation of a number contains infinitely many pairs of identical consecutive digits different from $0$ and $r$, then this number has a continuum of distinct $r_s$-representations, since each such pair admits an alternative replacement.

2. Consider the number $x_0=\Delta^{r_s}_{c(r)}$, where $c\neq r$. Then
\[\Delta^{r_s}_{c(r)}=\Delta^{r_s}_{[c+1][r-s](r)}=\Delta^{r_s}_{[c+1][r-s+1][r-s](r)}=
\Delta^{r_s}_{[c+1][r-s+1]... [r-s+1][r-s](r)}=\Delta^{r_s}_{[c+1](r-s+1)}.\]
The pair $(r-s+1,r-s+1)$ admits the alternative replacement $(r-s,r)$. Therefore, infinitely many alternatives arise, which implies that the set of distinct $r_s$-representations of $x_0$ is a continuum.

3. Consider $x_0=\Delta^{r_s}_{c(0)}=\Delta^{r_s}_{[c-1](s-1)}$, $c\neq 0$. It has a continuum set of distinct $r_s$-representations, since the pair $(s-1,s-1)$ admits the alternative replacement $(s-2,2s-1)$.

4. 
Let some $r_s$-representation of the number $x_0$ have no period $(c)$ and no infinite number of pairs of identical consecutive digits. Then, due to the finiteness of the alphabet, this representation of the number contains some ordered triple of digits $(a,b,c)$, where $a\neq b$, $b\neq c$ occurring infinitely many times as consecutive digits. We consider the possible cases..

4.1. If $b=0$, then for the pair $(a,b)$ the alternative is  $(a-1,s)$, since $a\ne 0$.

 4.2. If $a=0$, then $b\ne 0$, and for the pair $(b,c)$ the alternative is  $(b-1,c+s)$ when $c<s$, while for $c>s-1$, the alternative is $(b+1,c-s)$.
 
 In each case, alternative replacements generate infinitely many branches of representations, which yields a continuum of distinct $r_s$-representations. Consequently, all nontrivial level sets of $f$ are continuums.
\end{proof}
\begin{theorem}
Almost all (in the sense of Lebesgue measure)
level sets of the function $f$ are fractal (that is, have fractional Hausdorff--Besicovitch dimension) or anomalously fractal (that is, are uncountable and have Hausdorff dimension zero).
\end{theorem}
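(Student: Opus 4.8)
The plan is to establish the required dichotomy \emph{without} computing the exact dimension of a typical level set. It suffices to prove two facts for Lebesgue-almost every level value $y_0\in[0;\frac{r}{s-1}]$: first, that $f^{-1}(y_0)$ is uncountable, and second, that its Hausdorff--Besicovitch dimension is strictly less than $1$. Granting both, each such level set is uncountable with $\dim_H\in[0;1)$: if the dimension lies in $(0;1)$ the set is fractal, while if it equals $0$ the set is uncountable of zero dimension, i.e.\ anomalously fractal. Thus the entire problem reduces to an upper dimension bound together with an uncountability statement; in particular no matching lower bound on the dimension (which would require transfer-operator/thermodynamic machinery) is needed.

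For the dimension bound I would argue by a first-moment estimate over the level value. For a block $w=c_1\dots c_n\in A_r^n$ let $N_n(y_0)$ denote the number of such blocks that occur as the initial segment of some $r_s$-representation of $y_0$; recall that $y_0$ admits a representation beginning with $w$ exactly when $y_0$ lies in the rank-$n$ cylinder $\Delta^{r_s}_{c_1\dots c_n}$, whose length is $\frac{r}{s^n(s-1)}$ by Section~4. Since $f(x)=y_0$ forces the base-$(r+1)$ digits of $x$ to be an $r_s$-representation of $y_0$, the level set is covered by the $N_n(y_0)$ base-$(r+1)$ cylinders over admissible length-$n$ prefixes, each an interval of length $(r+1)^{-n}$. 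Integrating the indicator of ``$w$ is admissible for $y_0$'' over $y_0$ returns the cylinder length, so summing over all $(r+1)^n$ blocks gives the key identity $\int_0^{r/(s-1)}N_n(y_0)\,dy_0=\frac{r}{s-1}\big(\frac{r+1}{s}\big)^n$. By Markov's inequality and Borel--Cantelli (comparing $N_n$ with $n^2\big(\frac{r+1}{s}\big)^n$) one gets, for almost every $y_0$, that $N_n(y_0)\le n^2\big(\frac{r+1}{s}\big)^n$ eventually, whence $\limsup_n\frac{\ln N_n(y_0)}{n\ln(r+1)}\le 1-\frac{\ln s}{\ln(r+1)}<1$; the covering estimate then yields $\dim_H f^{-1}(y_0)\le 1-\frac{\ln s}{\ln(r+1)}<1$.

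For uncountability I would split along the two regimes already treated. If $r>2s-2$, the preceding theorem shows every level set other than those of $0$ and $\frac{r}{s-1}$ is a continuum, so all but two level values give uncountable sets. If $r<2s-1$, the earlier theorem identifies the set of level values with finite or countable level set as a set of Hausdorff--Besicovitch dimension $\frac{\ln(2s-r-1)}{\ln s}$; since $1\le 2s-r-1\le s-1<s$ throughout this range, that dimension is strictly below $1$ and the set is Lebesgue-null. In either case almost every $y_0$ has an uncountable level set, and combined with the previous paragraph this proves the dichotomy.

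The step I expect to be most delicate is the first-moment identity, since it hinges on correctly identifying the set of level values that admit the prefix $w$ with the rank-$n$ cylinder $\Delta^{r_s}_{c_1\dots c_n}$ and on its length $\frac{r}{s^n(s-1)}$ from Section~4; one must also verify that the base-$(r+1)$ prefix cylinders are genuinely disjoint (up to the countable set of $(r+1)$-binary endpoints fixed by our period-$(0)$ convention) so that the covering count is exactly $N_n(y_0)$. The conceptual point that keeps the proof short is the observation that the dichotomy follows from an upper dimension bound alone, so the harder matching lower bound---needed only to pin down the exact value $1-\frac{\ln s}{\ln(r+1)}$---can be avoided entirely.
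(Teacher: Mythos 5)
Your proposal is correct, but it takes a genuinely different and in one respect stronger route than the paper. The paper's own proof is soft: it cites \cite{Pratratushmatst,PratsVynnO} for the fact that almost every $y_0\in[0,\frac{r}{s-1}]$ has a continuum of $r_s$-representations, concludes that almost all level sets are uncountable, and then simply invokes the dichotomy that an uncountable set has either positive or zero Hausdorff--Besicovitch dimension. Notably, the paper never rules out the possibility that the dimension equals $1$, so it does not actually justify the word \emph{fractional} in the statement. Your first-moment argument fills exactly that gap: the identity $\int_0^{r/(s-1)}N_n(y_0)\,dy_0=(r+1)^n\cdot\frac{r}{s^n(s-1)}$ follows directly from the definition of the rank-$n$ cylinder as the set of values admitting the prefix $w$ and from its length $\frac{r}{s^n(s-1)}$ established in Section~4; Markov plus Borel--Cantelli then gives $N_n(y_0)\le n^2(\frac{r+1}{s})^n$ eventually for a.e.\ $y_0$, and the covering of $f^{-1}(y_0)$ by the $N_n(y_0)$ base-$(r+1)$ prefix cylinders of length $(r+1)^{-n}$ yields $\dim_H f^{-1}(y_0)\le 1-\frac{\ln s}{\ln(r+1)}<1$. (Disjointness of those cylinders is not even needed, since an upper bound on the covering number suffices for an upper box-dimension estimate.) Your uncountability step is also assembled differently: rather than the external references, you use the paper's own two preceding theorems, observing that for $r>2s-2$ all but two level sets are continua, while for $r<2s-1$ the exceptional set of values has dimension $\frac{\ln(2s-r-1)}{\ln s}\le\frac{\ln(s-1)}{\ln s}<1$ and is therefore Lebesgue-null. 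The net effect is a self-contained proof that is both logically complete where the paper's is not (the bound $\dim_H<1$) and explicit about covering both parameter regimes.
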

\begin{proof} As shown in~\cite{Pratratushmatst,PratsVynnO}, under the condition $r<2s-1$, almost all numbers in the interval $[0,\frac{r}{s-1}]$ admit a continuum set of distinct $r_s$-representations. Hence, almost all level sets of the function $f$ are uncountable. 

Every uncountable set has either positive Hausdorff--Besicovitch dimension or dimension zero. In the latter case, the set is called \emph{anomalously fractal}, since its topological dimension is zero and does not coincide with its fractal dimension.  

Moreover, it is proved in the same works that, under the condition $r<2s-2$, the set of numbers having a unique $r_s$-representation has Hausdorff--Besicovitch dimension
 $\frac{\ln(2s-r-1)}{\ln s}$.
\end{proof}

\end{document}